\documentclass[a4paper,12pt,twoside]{article}
\usepackage{amsmath,amssymb,ifthen}
\usepackage[amsmath,thmmarks]{ntheorem}
\usepackage{graphicx}
\usepackage{paper}
\usepackage[all]{xy}
\usepackage{mathrsfs}
\usepackage{url}

\DeclareMathOperator{\Ind}{Ind}
\DeclareMathOperator{\Res}{Res}

\newcommand{\G}{\mathrm{GSp}_{2n}}

\numberwithin{equation}{section}

\SelectTips{cm}{11}
\title{Correspondence of discrete series representations of $\mathrm{GSp}_{2n}$ and its inner form}
\author{Kaito Masuzawa}

\begin{document}
\maketitle



\begin{abstract}
	We construct a correspondence with a character equation of discrete series representations of $\mathrm{GSp}_{2n}$ over a $p$-adic field and its inner form for $p>2$ and $n\geq 2$.  This is a generalization of the works by Chan and Gan in \cite{MR3267112} for $n=2$ and the local Jacquet-Langlands  correspondence.  Moreover, our results partially reveal how the local Langlands correspondence of $\mathrm{GSp}_{2n}$ and its inner form over $p$-adic fields should be.  
\end{abstract}

\section{Introduction}

Let $F$ be a $p$-adic field and $D$ be a quaternion algebra over $F$.  The local Jacquet-Langlands correspondence is a bijection 
\[
	\mathrm{JL}\colon \,\Pi_{\mathrm{disc}}(\mathrm{GL}_2(F))\rightarrow \Pi_{\mathrm{disc}}(D^\times)
\]
satisfying various properties which is constructed in \cite{MR0401654}.  Here, $\Pi_{\mathrm{disc}}$ denotes the set of isomorphism classes of irreducible discrete series representations over $\C$.  

Now we explain one of the significant properties of this correspondence, that is the character relation.  There is a canonical correspondence between regular semisimple conjugacy classes of the two groups, $\mathrm{GL}_2(F)$ and $D^\times$.  Via this correspondence, for every corresponding regular semisimple elements $g\in\mathrm{GL}_2(F)$ and $h\in D^\times$, and for each $\pi\in\Pi_{\mathrm{disc}}(\mathrm{GL}_2(F))$, the equality
\[
	\Theta_{\pi}(g)=-\Theta_{\mathrm{JL}(\pi)}(h)
\]
holds.  Here, $\Theta_\pi$ is the character of $\pi$.  The local Jacquet-Langlands correspondence is generalized to $\mathrm{GL}_n$ and its inner forms over $p$-adic fields by Deligne, Kazhdan, and  Vign\'{e}ras in \cite{MR0771672}, and  to those over function fields by Badulescu in \cite{MR1951441}.    Moreover, these correspondences satisfy various properties which are enough for us to say they are canonical.  For details, see Section 2.2 in \cite{MR3618046}.  

Our main result is its generalization to the quasi-split general symplectic group $\mathrm{GSp}_{2n}$ and its inner form over $F$ with $p>2$ and $n\geq 2$.  We note that the group $\mathrm{GSp}_{2n}(F)$ can be written as 
\[
	\mathrm{GSp}_{2n}(F)=\{ g\in\mathrm{GL}_{2n}(F)\mid {}^t gjg=\mu(g)j,\;\mu(g)\in F^\times\}
\]
where $j\in\mathrm{GL}_{2n}(F)$ is the anti-diagonal matrix whose $(i, 2n+1-i)$ entry is $(-1)^{i+1}$.  Its unique non-quasi-split inner form $H$ is the quaternionic general unitary group. The group of $F$-points is given by
\[
	H(F)=\mathrm{GU}_n(D)=\{h\in\mathrm{GL}_n(D)\mid {}^t\overline{h} Jh=\mu(h)J,\;\mu(h)\in F^\times\}.  
\]
Here, $J\in\mathrm{GU}_n(D)$ is the anti-diagonal matrix whose $(i,n+1-i)$ entry is $1$ and $h\mapsto \overline{h}$ is an anti-involution of $D$ as an $F$-algebra.  

For $n=2$, Chan and Gan (\cite{MR3267112}) discovered the equality, multiplied by Kottwitz sign, between sums of characters of representations in $L$-packets of $\mathrm{GSp}_4(F)$ and those of $\mathrm{GU}_2(D)$.  The $L$-packets of a $p$-adic group $G$ are the fibers of the surjective finite-to-one map, from the set of isomorphism classes of irreducible admissible representations to the set of equivalence classes of $L$-parameters, $\Pi(G)\rightarrow \Phi(G)$ which is called the local Langlands correspondence.  The  local Langlands correspondence for general linear groups and their
 inner forms are bijective, that is, each $L$-packet consists of a single representation.  Therefore, the local Jacquet-Langlands correspondence can be viewed as a correspondence of $L$-packets with a character relation.  

For $G=\mathrm{GSp}_4(F)$, Gan and Takeda constructed the local Langlands correspondence in \cite{MR2800725}, while for $G=\mathrm{GU}_2(D)$, Gan and Tantono established it in \cite{MR3214276}.  These constructions of the local Langlands correspondence are due to their small rank.  Therefore, it is not easy to generalize their results to $n\geq 3$, and so far, the local Langlands correspondence for $\mathrm{GSp}_{2n}(F)$ is not clearly known.  On the other hand, by recent works by Xu (\cite{MR3568940} and \cite{MR3747484}), a natural method to partition $\Pi_{\mathrm{disc}}(\mathrm{GSp}_{2n}(F))$ into finite subsets is known. This is compatible with the local Langlands correspondence of $\mathrm{Sp}_{2n}(F)$ established by Arthur in \cite{MR3135650} via restriction of representations of $\mathrm{GSp}_{2n}(F)$ to those of $\mathrm{Sp}_{2n}(F)$.  For details, see Section 3.  

Although local $L$-packets proposed by Xu's work are not attached to $L$-parameters, we associate each Xu's local discrete $L$-packets (i.e. $L$-packets consists of discrete series representations) of $\mathrm{GSp}_{2n}(F)$ to a finite subset of $\Pi_{\mathrm{disc}}(\mathrm{GU}_n(D))$ with the character relation (Theorem \ref{main}).  This is our main result.  We note that Xu's construction of local $L$-packets are valid for all tempered representations, but in this paper, we only treat discrete series representations.  Due to this character relation, the sum of characters in each discrete $L$-packets of $\mathrm{GSp}_{2n}(F)$ are stable, that is, its value only depends on stable conjugacy classes.  Moreover, we can see these packets, also those of $\mathrm{GU}_n(D)$, are minimal sets in which some linear combination of characters of representations are stable functions (Section 4.3).  Kaletha calls this property atomically stable (cf. \cite{MR4680348}, see also \cite{some-comment}) and local $L$-packets of a $p$-adic group $G$ are highly expected to satisfy this property.  In addition, thanks to linear independence of characters of representations, this property might be enough to characterize the partition of $\Pi_{\mathrm{disc}}(G)$.  

Now we sketch the strategy to prove our main result.  We use a global method, the theory of the stable trace formula.  This framework to construct the correspondence is inspired by the argument in \cite[Section 11]{MR3267112}.  First of all, we globalize a discrete series representation of $\mathrm{GSp}_{2n}(F)$ to a cuspidal automorphic representation.  In this step, we use the Plancherel density theorem by Shin (\cite{MR3004076}) and this is applicable to discrete series representations.  Picking proper global test functions and applying the stable trace formula, we can get an equation between sums of trace distributions of some representations of $\mathrm{GSp}_{2n}(F)$ and $\mathrm{GU}_{n}(D)$.  Here, to the $\mathrm{GSp}_{2n}$ side, only some weakly equivalent classes contribute which is written as $\Sigma$ in this paper (introduced at the end of Section 2.2).  These are explained in Section 2.  

In Section 3, we give a brief review of Xu's local $L$-packets and global results.  At the end, we prove the finiteness of $\Sigma$.  In the next section, we show that $\Sigma$ coincide with a Xu's local $L$-packet.  This nontrivial result is due to a recent work by Kret and Shin in \cite{MR4556781}.  This leads us to the character relations of Xu's packets and some packets of the inner form.  In the following sections, we remark the atomical stability of packets of both sides and the case of simple supercuspidal packets.  

\vspace{15pt}
{\it Acknowledgements.}
The author is grateful to his supervisor Yoichi Mieda for his support.  He always gave effective advice whenever the author asked him various questions.  This paper was completed thanks to his precise guidance.  

\section{Stable trace formula}
\subsection{Globalization of discrete series representations of $\mathrm{GSp}_{2n}$}
Let $F$ be a $p$-adic field, $n\geqq 2$ be an integer, and $\pi_G$ be an irreducible discrete series representation of $\G(F)$.  In this paper we assume that the residual characteristic $p$ of $F$ is greater than $2$.  We take a totally real number field $L$ and its distinct three finite places $v_0,v_1,v_2$ such that the completion $L_{v_0}$ of $L$ at $v_0$ is $F$.  We write $\A_L$ for the adele ring of $L$.  

We start with the possibility of globalizing each discrete series representation to a certain cuspidal automorphic representation.  Now, we fix Haar measures on $\G(L_v)$ and $H(L_v)$ for each place $v$.  

\begin{prop}
	There exist a cuspidal automorphic representation $\Pi$ of $\G(\A_L)$ and unramified characters $\mu_i\colon \G(L_{v_i})\rightarrow\C^\times$ (i.e. the composites of the similitude character and an unramified character of $L_{v_i}^\times$) for $i=0,1$ satisfying 
	\begin{enumerate}
		\item $\Pi_{v_0}\cong \mu_0\otimes\pi_G$, 
		\item $\Pi_{v_1}\cong \mu_1\otimes\mathrm{St}_{v_1}$ where $\mathrm{St}_{v_1}$ is the Steinberg representation of $\G(L_{v_1})$, 
		\item $\Pi_v$ is unramified for any finite place $v$ other than $v_0,v_1, \text{and}\; v_2$, and 
		\item $\Pi_{v_\infty}$ is a discrete series representation for any infinite place $v_\infty$.
	\end{enumerate}
\end{prop}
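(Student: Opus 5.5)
The plan is to globalize by Shin's Plancherel density theorem (\cite{MR3004076}) applied to $\G$ over the totally real field $L$. The strategy is to take an open bounded set of the unitary dual at the places $v_0,v_1$ that isolates $\pi_G$ and $\mathrm{St}_{v_1}$ up to unramified twist. We also fix a regular discrete series at infinity, with a sufficiently regular infinitesimal character so that all contributing automorphic representations are cuspidal. Then we count automorphic representations with prescribed level away from $v_0,v_1,v_2$. A positive count yields the desired $\Pi$.

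First we handle the centre. $\G$ has centre $\mathbb{G}_m$, which is not anisotropic modulo the split centre, so we work with $\G(\A_L)$ modulo $A_{G,\infty}$, i.e.\ with a fixed central character. Twisting $\pi_G$ by an unramified character $\mu_0$, we may arrange that its central character agrees with the local component at $v_0$ of a Hecke character $\omega$ of $L^\times\backslash\A_L^\times$. We choose $\omega$ unramified outside $v_0,v_2$ and compatible with the chosen infinity type. The place $v_2$ absorbs any ramification forced by global class field theory. The same freedom produces $\mu_1$ at $v_1$. We then use the version of Shin's theorem with fixed central character.

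Next we set up the test functions. At $v_0$, let $f_{v_0}$ be a pseudo-coefficient (or matrix coefficient) of $\mu_0\otimes\pi_G$. At $v_1$, take a pseudo-coefficient of $\mu_1\otimes\mathrm{St}_{v_1}$. Their Plancherel transforms $\widehat{f}$ are supported on the discrete series in question, up to the finitely many unramified twists that share the central character. At $v_2$, take the characteristic function of a small open compact subgroup. Elsewhere, take the characteristic function of the hyperspecial maximal compact subgroup. At each infinite place, take an Euler--Poincaré function for a regular discrete series $L$-packet of weight $\xi$.

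Now apply Shin's limit multiplicity formula as the level at $v_2$ shrinks, or as the weight $\xi$ grows. In either limit, the normalized count $\sum_\Pi m(\Pi)\,\mathrm{tr}\,\Pi_{S}(f_{S})$ tends to $\widehat{\mu}^{\mathrm{pl}}(\widehat f_{S})$, where $S=\{v_0,v_1\}$. This limit is positive, since the formal degrees are positive. Hence some automorphic $\Pi$ with these local components exists. Pseudo-coefficients can pick up non-tempered constituents with nonzero Euler--Poincaré trace, so we use a genuine matrix coefficient at $v_0$, whose trace is nonzero only on $\pi_G$ itself. Cuspidality follows from the regularity of $\xi$, or alternatively from the Steinberg component at $v_1$ together with a supercuspidal-type matrix coefficient. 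With regular weight, $\Pi_\infty$ is in the discrete series.

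The main obstacle is the central character bookkeeping, since $\G$ has a non-trivial split centre, together with checking that Shin's hypotheses are satisfied. Those hypotheses are a cuspidal/discrete-series condition at infinity and the isolating property of the chosen test functions modulo unramified twists. The unramified characters $\mu_0,\mu_1$ appear precisely because the isolation is only up to such twists.
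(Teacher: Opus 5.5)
There is a genuine gap at the step \emph{this limit is positive, hence some automorphic $\Pi$ with these local components exists}.

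Shin's limit formula controls $\sum_\Pi m(\Pi)\,\mathrm{tr}\,\Pi_S(f_S)$. Here $\Pi_S$ runs over arbitrary \emph{unitary} representations of $\mathrm{GSp}_{2n}(L_S)$, tempered or not, and pseudo-coefficients do not isolate the unramified-twist orbit of $\pi_G\otimes\mathrm{St}_{v_1}$ inside the unitary dual. A pseudo-coefficient of a discrete series $\sigma$ vanishes on tempered $\rho\not\cong\sigma$ and on properly induced representations. It does not vanish on non-tempered unitary constituents with the same cuspidal support: already the pseudo-coefficient of $\mathrm{St}_{v_1}$ has trace $\pm1$ on the one-dimensional twists. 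So your claim that $\widehat f$ is supported on the discrete series is false. Positivity of the limit only yields some $\Pi$ with $\mathrm{tr}\,\Pi_S(f_S)\neq 0$. Nothing in your argument prevents $\Pi_{v_0}$ or $\Pi_{v_1}$ from being such a non-tempered representation, and cuspidal representations of CAP type do have non-tempered local components.

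Your remedy does not close this gap. If $\pi_G$ is not supercuspidal, its matrix coefficients are not compactly supported modulo the centre, so Shin's theorem does not apply to them as test functions, and they need not even act on non-tempered unitary representations. If $\pi_G$ is supercuspidal, the pseudo-coefficient already is a matrix coefficient, so nothing changes. The same issue at $v_1$ is not addressed at all.

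The missing ingredient is the extension of the limit formula from $\widehat f_S$ to characteristic functions of $\widehat\mu^{\mathrm{pl}}_S$-regular sets, which rests on Sauvageot's density principle. This is exactly what the paper uses.
Let $\widehat U$ be the orbit of $\pi_G\otimes\mathrm{St}_{v_1}$ under unramified twists of $\mathrm{GSp}_{2n}(L_S)$, with $S=\{v_0,v_1\}$. By \cite[Example 5.6]{MR3004076} it is relatively quasi-compact, $\widehat\mu^{\mathrm{pl}}_S$-regular and of positive Plancherel measure. Take levels $U_n$ shrinking at $v_2$ and hyperspecial elsewhere, and $\xi$ regular. Then \cite[Corollary 4.5]{MR3004076} gives $\widehat\mu^{\mathrm{cusp}}_{\mathrm{char}_{U_n},\xi}(\mathrm{char}_{\widehat U})\to\widehat\mu^{\mathrm{pl}}_S(\widehat U)>0$.

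The left side only sees $\Pi$ with $\Pi_S\in\widehat U$. Hence (i) and (ii) hold with $\mu_0,\mu_1$ produced automatically, (iii) comes from the level, and (iv) comes from the regularity of $\xi$ via \cite[Lemma 2.7]{MR3004076}. The paper phrases this as a contradiction argument. Showing directly that the non-tempered part of your sum is negligible in the limit, because it lives on a Plancherel-null set, would require the same regular-set statement.

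This also makes your central-character paragraph unnecessary. Shin works modulo $A_{G,\infty}$, which is not the same as fixing a central character. By taking the full unramified-twist orbit, the global central character comes out of the theorem, rather than having to be built by hand from a Hecke character ramified only at $v_0,v_2$.
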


\begin{prf}
	We use the same notation and prove the proposition by a similar argument in \cite[Theorem 5.7]{MR3004076}.  We put $S=\{v_0,v_1\}$ and let $\widehat{U}$ be the orbit of $\pi_G\otimes\mathrm{St}_{v_1}$ under twists by unramified characters of $\G(L_S)$.  As remarked in \cite[Example 5.6]{MR3004076},  the set $\widehat{U}$ is a $\widehat{\mu}^\mathrm{pl}_S$-regular relatively quasi-compact subset of the set of irreducible representations of $\G(L_S)$ such that $\widehat{\mu}^\mathrm{pl}_S(\widehat{U})>0$.  

Now we suppose this proposition is false.  Let $\{U_{v_2,n}\}_n$ be a decreasing sequence forming a fundamental system of neighborhoods of $1$ in $\G(L_{v_2})$ and $U^{S,v_2,\infty}$ be a hyperspecial maximal compact subgroup of $\G(L^{S,v_2,\infty})$.  We put $U_n=U_{v_2,n}U^{S, v_2,\infty}$.  If an irreducible algebraic representation $\xi$ of $(\mathop{\mathrm{Res}}_{L/\Q}\G)\times_\Q \R$ has a regular highest weight, by the equation (3.3) in \cite{MR3004076}, for every $\pi_S^0\in\widehat{U}$, we have the equality
\[
	m_\mathrm{cusp}(\pi_S^0, \mathrm{char}_{U_n}, \xi)=\sum_\pi m_\mathrm{cusp}(\pi)\mu^{S,\infty}(U_n)\dim(\pi^{S,\infty})^{U_n}\Tr\pi_\infty(\phi_\xi).  
\]
Here, $\phi_\xi$ denotes the Euler-Poincar\'{e} function (see the equation (2.1) in \cite{MR3004076}), and $\pi$ runs over the isomorphism classes of irreducible admissible representations of $\G(\A_L)$ such that $\pi_S\cong\pi_S^0$ and $\pi_\infty$ is $\xi$-cohomological.  Our assumption and \cite[Lemma 2.7]{MR3004076} show that each term of the right hand side of the above equation is zero.  Therefore, by the equation (3.5) in \cite{MR3004076}, we have 
\[
	\widehat{\mu}^\mathrm{cusp}_{\mathrm{char}_{U_n}, \xi}(\mathrm{char}_{\widehat{U}})=0
\]
for each $n$, hence $\widehat{\mu}^\mathrm{pl}_S (\widehat{U})=0$ by \cite[Corollary 4.5]{MR3004076}.  This contradicts to $\widehat{\mu}^\mathrm{pl}_S(\widehat{U})>0$.  This proves the proposition.  
\end{prf}

In the following of this paper, we fix such a pair $(\Pi,\mu_0,\mu_1)$.  Let $B$ be a quaternion algebra over $L$ which is ramified precisely at $v_0$ and $v_1$ and $H$ be the similitude group associated to a quaternionic Hermitian space over $B$ of split rank $n$.  In this situation, we can take isomorphisms $H(L_v)\cong \G(L_v)$ for any $v\neq v_0,v_1$.  In the followings, we fix these isomorphisms.  Now, we can take a test function $f=\otimes_v^\prime f_v$ on $H(\A_L)$ such that 
	\begin{itemize}
		\item $f_{v_1}$ is a product of $\mu_1$ and the pseudo-coefficient of the Steinberg representation $\mathrm{St}^H_{v_1}$ of $H(L_{v_1})$, 
		\item $f_{v_\infty}$ is the pseudo-coefficient of $\Pi_{v_\infty}$ via the above isomorphism $H(L_{v_\infty})\cong \G(L_{v_\infty})$ for any infinite $v_\infty$, and 
		\item $f_v$ is any element of the spherical Hecke algebra of $H(L_v)$ for every finite $v\neq v_1$.  
	\end{itemize}
We note that the character $\mu_i$ can be considered as an unramified character of $H(L_{v_i})$ via the similitude character and the same unramified character of $L_{v_i}^\times$ (i=0,1).  

Since $H(L_{v_i})$ is an inner form of $\G(L_{v_i})$ for $i=0,1$, we can also take test functions $f^G=\otimes_v^\prime f_v^G$ on $\G(\A_L)$ such that 
	\begin{itemize}
		\item $f_{v_1}^G$ is a product of $\mu_1$ and the pseudo-coefficient of $\mathrm{St}_{v_1}$, 
		\item $f_{v_\infty}^G$ is the pseudo-coefficient of $\Pi_{v_\infty}$ for any infinite $v_\infty$, 
		\item $f_v^G=f_v$ via the isomorphism $H(L_v)\cong \G(L_v)$ for every finite $v\neq v_0, v_1$, and 
		\item $f_{v_0}^G$ is a transfer of $f_{v_0}$ (see \cite[Section 4]{MR3267112}).  
	\end{itemize}

Then, due to the following lemma, we can see $f^G$ and $f$ are matching test functions.  

\begin{lem}\label{matching-orbital}
	The pseudo-coefficients of $\mathrm{St}_{v_1}$ and $\mathrm{St}^H_{v_1}$ have matching stable orbital integrals.  
\end{lem}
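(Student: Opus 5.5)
The plan is to compute both sides explicitly via Kottwitz's formula for orbital integrals of pseudo-coefficients. Fix compatible Haar measures on $\G(L_{v_1})$ and $H(L_{v_1})$, meaning measures attached to a common invariant differential form transported along the inner twist, together with matching measures on the corresponding tori. Write $\varphi$ and $\varphi^H$ for the pseudo-coefficients of $\mathrm{St}_{v_1}$ and $\mathrm{St}^H_{v_1}$.

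\textbf{Step 1: orbital integrals of pseudo-coefficients.} Take a regular semisimple $\gamma$. By Kottwitz's theorem (the Euler--Poincar\'e / pseudo-coefficient computation in \emph{Tamagawa numbers}, combined with Schneider--Stuhler), $O_\gamma(\varphi)=0$ unless $\gamma$ is elliptic. If $\gamma$ is elliptic with centralizer the torus $T_\gamma$, then
\[
	O_\gamma(\varphi)=\mathrm{vol}(T_\gamma/Z)^{-1}\,\overline{\Theta_{\mathrm{St}}(\gamma)},
\]
up to a sign and central-character normalization. The twist by $\mu_1$ is harmless, since it is pulled back along $\mu$ on both groups. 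The character of the Steinberg representation on elliptic regular elements is computed by Casselman's formula, and on both groups it equals $(-1)^{\mathrm{rk}}$ times the constant $1$ (modulo the central normalization). Here $\mathrm{rk}$ is the $F$-split semisimple rank: $n$ for $\G$, and $\lfloor n/2\rfloor$ for $\mathrm{GU}_n(D)$ in the paper's normalization. The same formula holds on $H(L_{v_1})$ with $\mathrm{St}^H$.

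\textbf{Step 2: stable orbital integrals.} The stable orbital integral at a strongly regular elliptic $\gamma$ sums $O_{\gamma'}$ over the rational classes $\gamma'$ in the stable class of $\gamma$. These are indexed by $\ker(H^1(F,T_\gamma)\to H^1(F,G))$. Every summand has the same value, because the character is constant and the tori $T_{\gamma'}$ are all isomorphic to $T_\gamma$, so their volumes agree. Therefore
\[
	SO_\gamma(\varphi)=\pm\, |\ker(H^1(F,T_\gamma)\to H^1(F,\G))|\cdot \mathrm{vol}(T_\gamma/Z)^{-1},
\]
and similarly for $H$. Under the standard correspondence of stable elliptic classes, a class in $\G$ matches one in $H$ exactly when its torus transfers to the inner form. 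Every elliptic torus of $\G$ transfers, because $H^1(F,T)\to H^1(F,G_{\mathrm{ad}})$ is surjective for elliptic $T$. The number of rational classes on each side is then governed by Kottwitz's description $H^1(F,T)\cong\pi_0(\widehat{T}^\Gamma)^D$. For both groups the kernel consists of the classes in $H^1(F,T)$ mapping to the trivial class of the corresponding group, and the two fibers have the same cardinality. This holds because $H^1(F,\G)$ is trivial, while the fibers of $H^1(F,T)\to H^1(F,\G_{\mathrm{ad}})$ over the two classes of $H^1(F,\G_{\mathrm{ad}})$ are torsors under the same group. So the two stable orbital integrals agree up to the sign $(-1)^{\mathrm{rk}\,\G-\mathrm{rk}\,H}$, which is the Kottwitz sign $e(H)$. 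Absorbing that sign into the transfer convention of \cite[Section 4]{MR3267112}, the functions match.

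\textbf{Step 3: non-elliptic elements.} At non-elliptic $\gamma$, both stable orbital integrals vanish. For $\G$, orbital integrals of a pseudo-coefficient vanish off the elliptic set, since they are orbital integrals of a cuspidal Euler--Poincar\'e type function. Elements not transferring to $H$ need no condition on the $H$ side.

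\textbf{Main obstacle.} The delicate point is the equal-cardinality claim for the fibers in Step 2, together with correctly tracking the signs and the central/similitude normalization. In particular, one must check that Casselman's formula for the Steinberg character gives $(-1)^{\mathrm{rk}}$ with the right rank for $\mathrm{GU}_n(D)$ modulo the center. If the cohomological count proves awkward, the fallback is to cite Kottwitz's general result that the Euler--Poincar\'e functions of inner forms, $e(G)f_{EP}$, are stably matching. This result applies verbatim to $\G$ and $H$ modulo center, and it finishes the proof, since pseudo-coefficients of Steinberg representations agree with Euler--Poincar\'e functions on the elliptic set up to sign.
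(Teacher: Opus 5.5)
Your route is essentially the paper's. The Steinberg pseudo-coefficients have orbital integrals that vanish off the elliptic set and equal $(-1)^{r}$ times an inverse torus volume on it. The stable orbital integrals are therefore these constants times the number of rational classes in the stable class, the discrepancy $(-1)^{r(\G)-r(H)}$ is the Kottwitz sign, and everything reduces to showing the two class counts agree. You obtain the elliptic value from Kazhdan's formula and $\Theta_{\mathrm{St}}=(-1)^{r}$ on the elliptic set, whereas the paper reads it off Kottwitz's Theorems $2$ and $2'$ on Euler--Poincar\'e functions; these inputs are interchangeable.

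The gap is in Step 2, at the point that carries the content of the lemma. You use the map $H^1(F,T)\to H^1(F,\mathrm{PGSp}_{2n})$ twice: for the transfer of elliptic tori, and for the fibre count. This map factors through $H^1(F,\G)$, which you yourself note is trivial, so the map is trivial. It is not surjective, and its fibre over the nontrivial class of $H^1(F,\mathrm{PGSp}_{2n})\cong\mathbb{Z}/2$ (the class of $H$) is empty. As written, neither the transfer of elliptic tori nor the equality of the two counts follows.

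The simplest repair is the paper's ``$H^1$-vanishing''. $H^1(F,H)$ is also trivial: use $1\to\mathrm{U}_n(D)\to H\to\mathbb{G}_m\to 1$, with $\mathrm{U}_n(D)$ simply connected (Kneser) and $H^1(F,\mathbb{G}_m)=1$. Hence the kernel on the $H$ side is all of $H^1(F,T_\delta)$. Since $T_\delta\cong T_\gamma$ over $F$, both counts equal $|H^1(F,T_\gamma)|$.

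Alternatively, your fibre argument works once you replace $T$ by $T_{\mathrm{ad}}=T/Z$. For elliptic $T$, Kottwitz's description makes $H^1(F,T_{\mathrm{ad}})\to H^1(F,\mathrm{PGSp}_{2n})$ a surjective homomorphism, which gives the transfer of elliptic tori, and its two fibres are cosets of one subgroup. You must then add that these fibres count $\mathrm{PGSp}_{2n}(F)$-orbits and $H_{\mathrm{ad}}(F)$-orbits respectively. These coincide with $\G(F)$- and $H(F)$-conjugacy classes because both centres are $\mathbb{G}_m$, so Hilbert 90 makes $\G(F)\to\mathrm{PGSp}_{2n}(F)$ and $H(F)\to H_{\mathrm{ad}}(F)$ surjective.

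With either fix, the rest of your argument goes through as in the paper, including absorbing $e(H)=(-1)^{r(\G)-r(H)}$ into the transfer convention.
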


\begin{prf}
	Let $dg$ (resp. $dh$) be a Haar measure of $\G(L_{v_1})$ (resp. $H(L_{v_1})$) and $f_G$ (resp. $f_H$) be the pseudo-coefficient of $\mathrm{St}_{v_1}$ (resp. $\mathrm{St}_{v_1}^H$).  Moreover, let $f_{G,\mathrm{EP}}$ (resp. $f_{H,\mathrm{EP}}$) be the Euler-Poincar\'{e} function on $\G(L_{v_1})$ (resp. $H(L_{v_1})$) (cf. \cite{MR0942522}).  

Then, we have $f_G=(-1)^{r(\G(L_{v_1}))}f_{G,\mathrm{EP}}$ and $f_H=(-1)^{r(H(L_{v_1}))}f_{H,\mathrm{EP}}$ by \cite[Theorem $2^\prime$]{MR0942522}.  Here, $r$ denotes the split rank of the derived group.  So the Kottwitz sign of $H(L_{v_1})$ is equal to $(-1)^{r(\G(L_{v_1}))-r(H(L_{v_1}))}$.  

Now we take a strongly regular semisimple element $\gamma\in\G(L_{v_1})$.  We write $I_\gamma$ for the centralizer of $\gamma$ in $\G(L_{v_1})$ and $\mu_{I_\gamma}$ for the Euler-Poincar\'{e} measure on $I_\gamma$ (cf. \cite{MR0942522}).  Unless $\gamma$ is elliptic, the orbital integral of $f_G$ at $\gamma$ is zero by \cite[Theorem 2]{MR0942522}, similarly for $f_H$.  

Suppose that $\gamma$ is elliptic.  Then, there exists an elliptic regular semisimple element $\delta$ of $H(L_{v_1})$ which is stably conjugate to $\gamma$.  Let $J_\delta$ be the centralizer of $\delta$ in $H(L_{v_1})$ and $\mu_{J_\delta}$ be the Euler-Poincar\'{e} measure of $J_\delta$.  Since $\gamma$ and $\delta$ are elliptic, $r(I_\gamma)=r(J_\delta)=0$. Hence by \cite[Theorem 2]{MR0942522}, for invariant measures $di=(-1)^{r(\G(L_{v_1}))}\mu_{I_\gamma}$ and $dj=(-1)^{r(H(L_{v_1}))}\mu_{J_\delta}$, we have
\begin{align*}
	\int_{I_\gamma\backslash \G(L_{v_1})}f_G(g^{-1}\gamma g)dg/di&=(-1)^{r(\G(L_{v_1}))}\;\text{and}\\
	\int_{J_\delta\backslash H(L_{v_1})}f_H(h^{-1}\delta h)dh/dj&=(-1)^{r(H(L_{v_1}))}.  
\end{align*}

By \cite[Theorem 1]{MR0942522}, the matching condition of $f_G$ and $f_H$ is equivalent to the equality of the numbers of conjugacy orbits in the stable conjugacy orbit of $\gamma$ and $\delta$.  This is immediate from the $H^1$-vanishing theorem.  
\end{prf}

\subsection{Applying a stable trace formula}
Let $\widetilde{\chi}$ be the central character of the automorphic representation $\Pi$.  Thanks to the stabilization of discrete part of the trace formula in \cite{MR1802795}, \cite{MR1954821} and \cite{MR2031854}, we have the equalities
\begin{align*}
	I_{\mathrm{disc}}^H(f)&=S_{\G}(f^G)+\sum_{C\in\mathcal{E}_{\mathrm{ell}}(H),\;C\neq \G}\iota(C,H)S_C(f^C) \\
	I_{\mathrm{disc}}^{\G}(f^G)&=S_{\G}(f^G)+\sum_{C\in\mathcal{E}_{\mathrm{ell}}(\G),\;C\neq \G}\iota(C,\G)S_C((f^G)^C).  
\end{align*}
Here, $C$ runs through the set $\mathcal{E}_{\mathrm{ell}}$ of elliptic endoscopic groups, $\iota$ is some constant, $f^C$ is a transfer to $C$, and $S_C$ is a stable distribution depending only on $C$.  

As we see in the previous subsection, we can take the transfer as $f^G=f$.  In addition, since the orbital integrals of $f_{v_1}^G$ and $f_{v_1}$ are stable from the proof of Lemma~\ref{matching-orbital}, we have $S_C(f^C)=S_C((f^G)^C)=0$ for $C\neq\G$.  Hence, we have $I_{\mathrm{disc}}^H(f)=I_{\mathrm{disc}}^{\G}(f^G)$.  

In our choice of test functions, we take pseudo-coefficients at more than one places ($v_1$ and archimedean ones) and they are supported on elliptic elements, therefore cuspidal in the sense of Arthur's use in \cite[Section 7]{MR0939691}.  The trace formula can be simplified for terms associated to proper Levi subgroups to vanish (see \cite[Theorem 7.1 and Corollary 7.3]{MR0939691}).  For precise description of $I_{\mathrm{disc}}$, see also \cite{MR2192011}.  
By applying this simplified trace formula, varying $f_v$ for all finite $v$ other than $v_1$, and using linear independence of characters, we have the following equation: 
\begin{multline}\label{eq-of-trace}
	\sum_{\substack{\rho\subset L^2_{\mathrm{disc}}(\G(L)\backslash\G(\A_L),\widetilde{\chi}) \\[4pt] \rho^{v_0}\cong \Pi^{v_0}}}
		m(\rho)\Tr \rho_{v_0}(f_{v_0}^G) \\
		=\sum_{\substack{\sigma\subset L^2_{\mathrm{disc}}(H(L)\backslash H(\A_L),\widetilde{\chi}) \\[4pt] \sigma^{v_0,v_1}\cong \Pi^{v_0,v_1},\;\sigma_{v_1}\cong \mu_1\mathrm{St}^H_{v_1}}}
				m(\sigma)\Tr_{\sigma_{v_0}}(f_{v_0}).
\end{multline}

Here, $\widetilde{\chi}$ is the central character of $\Pi$ and $m$ denotes the multiplicity in the discrete spectrum $L^2_{\mathrm{disc}}(-,\widetilde{\chi})$.  

In the following of this paper, we determine the set of the $v_0$ components of irreducible summand of  $L^2_{\mathrm{disc}}(\G(L)\backslash\G(\A_L),\widetilde{\chi})$ appearing on the left hand side of the above equation.  It is denoted by 
\[
	\Sigma=\{\rho_{v_0}\mid \rho\subset L^2_{\mathrm{disc}}(\G(L)\backslash\G(\A_L),\widetilde{\chi}),\; \rho^{v_0}\cong \Pi^{v_0}\}.
\]
In the next section, we show that this set agrees with the local discrete $L$-packet containing $\mu_0\otimes\pi_G$ constructed in \cite{MR3568940}.  

\section{Xu's local and global results for $\mathrm{GSp}_{2n}$}
We first recall Xu's local and global results we need.  We use the same notation as \cite{MR3568940} and \cite{MR4887967}.   In this section, we put $\widetilde{G}=\mathrm{GSp}_{2n}$ and $G=\mathrm{Sp}_{2n}$.  

\subsection{Local discrete $L$-packets}\label{Xus-local}
Let $\phi\in \Phi_{\mathrm{bdd}}(G(F))$ be a bounded local $L$-parameter of $G(F)$ and $\Pi_\phi$ be the Arthur's $L$-packet of $\phi$.  For a central character $\widetilde{\chi}$ of $Z_{\widetilde{G}(F)}$, we write $\widetilde{\Pi}_{\phi,\tilde{\chi}}$ for the set of equivalence classes of irreducible representations $\pi$ of $\widetilde{G}(F)$ such that
	\begin{itemize}
		\item the central character of $\pi$ is $\widetilde{\chi}$, and
		\item the restriction $\pi|_{G(F)}$ is isomorphic to a direct sum of some representations in $\Pi_\phi$ with some multiplicities.  
	\end{itemize}
We note that the second condition is equivalent to that $\pi|_{G(F)}$ contains at least one representation in $\Pi_\phi$.  We put $X=\Hom(\widetilde{G}(F)/Z_{\widetilde{G}}(F)G(F),\C^\times)$.  This group acts on the set $\widetilde{\Pi}_{\phi,\tilde{\chi}}$ and \cite[Proposition 6.28]{MR3568940} says the stabilizers of all representations are the same (in \cite{MR3568940}, it is denoted by $\alpha(\mathcal{S}_{\underline{\phi}})$).  

By \cite[Theorem 6.30]{MR3568940},  there exists a unique subset $\Pi_{\tilde{\phi}}\subset \widetilde{\Pi}_{\phi,\tilde{\chi}}$ up to twisting by $X$ such that
	\begin{enumerate}
		\item $\displaystyle \widetilde{\Pi}_{\phi,\tilde{\chi}}=\bigsqcup_{\omega\in X/\alpha(\mathcal{S}_{\underline{\phi}})}\Pi_{\tilde{\phi}}\otimes \omega$, and 
		\item the sum of trace distributions $\displaystyle \sum_{\pi\in\Pi_{\tilde{\phi}}}\Tr_{\pi}$ is a stable distribution.  
	\end{enumerate}
For each local discrete $L$-parameter $\phi$, we choose and fix an $L$-packet $\Pi_{\tilde{\phi}}$ of $\widetilde{G}(F)$ satisfying the above properties.  

\subsection{Decomposition of the discrete spectrum}
Let $\widetilde{\chi}$ be a character of $Z_{\widetilde{G}}(\A_L)/Z_{\widetilde{G}}(L)$  For a global parameter $\phi=\otimes^\prime_v \phi_v\in\Phi_2(G)$, by \cite[Theorem 1.1]{MR4887967}, there exists a global $L$-packet $\Pi_{\tilde{\phi}}$ of isomorphism classes of irreducible admissible representations of $\widetilde{G}(\A_L)$ such that 
	\begin{enumerate}
		\item for each $\pi=\otimes^\prime_v \pi_v\in\Pi_{\tilde{\phi}}$, $\pi_v\in\Pi_{\tilde{\phi}_v}$ for all places $v$, and 
		\item $\displaystyle L^2_{\mathrm{disc}}(\widetilde{G}(L)\backslash\widetilde{G}(\A_L),\widetilde{\chi})=\bigoplus_{\phi\in\Phi_2(G)}\bigoplus_{\omega\in Y/\alpha(\mathcal{S}_\phi)} \bigoplus_{\substack{\pi\in\Pi_{\tilde{\phi}} \\[4pt] \langle\cdot,\pi\otimes\omega\rangle=1}} \pi\otimes\omega$.  
	\end{enumerate}
Here, $Y=\Hom(\widetilde{G}(\A_L)/\widetilde{G}(L)Z_{\widetilde{G}}(\A_L)G(\A_L),\C^\times)$ whose subgroup $\alpha(\mathcal{S}_\phi)$ acts on $\Pi_{\tilde{\phi}}$ trivially and this packet is unique up to twisting by $Y$.  We now choose and fix a global packet for each global discrete parameter.  

Moreover, for $\phi\in\Phi_2(G)$ and $\omega\in Y$, we put
\[
	R(\phi, \omega)=\bigoplus_{\substack{\pi\in\Pi_{\tilde{\phi}} \\[4pt] \langle\cdot,\pi\otimes\omega\rangle=1}} \pi\otimes\omega.  
\]

We assume that the cuspidal automorphic representation $\Pi$ is a summand of $R(\phi,\omega)$.  We note that in order for the following argument to be valid, it plays a significant role that $\Pi_\infty$ is a discrete series representation and $\Pi_{v_1}$ is a character twist of the Steinberg representation.  

\begin{lem}\label{multiplicity-one-and-kret-shin}
	Suppose an irreducible summand $\rho$ of $R(\psi,\eta)$ satisfies $\rho^{v_0}\cong\Pi^{v_0}$.  Then, we have $\phi=\psi$, $\omega=\eta$ and $m(\rho)=1$.  
\end{lem}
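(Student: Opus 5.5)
The plan has two parts. First I identify the global parameter, so that $\psi=\phi$. Then I separate the twist classes and the multiplicity, so that $\eta=\omega$ and $m(\rho)=1$.

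\textbf{Step 1: $\psi=\phi$.} Suppose $\rho$ is an irreducible summand of $R(\psi,\eta)$ with $\rho^{v_0}\cong\Pi^{v_0}$. At every place $v\neq v_0$ we have $\rho_v\cong\Pi_v$. Restrict to $G(L_v)=\mathrm{Sp}_{2n}(L_v)$. The constituents of $\Pi_v|_{G(L_v)}$ lie in $\Pi_{\phi_v}$, and those of $\rho_v|_{G(L_v)}$ lie in $\Pi_{\psi_v}$. Arthur's local packets for $\mathrm{Sp}_{2n}$ are disjoint for distinct bounded parameters, so $\phi_v=\psi_v$ for all $v\neq v_0$, in particular for almost all unramified $v$. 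The parameters $\phi,\psi\in\Phi_2(G)$ are formal sums of cuspidal automorphic representations of general linear groups. Strong multiplicity one for isobaric sums (Jacquet--Shalika) then gives $\phi=\psi$.

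\textbf{Step 2: $\eta=\omega$.} The twists $\Pi_{\tilde\phi}\otimes\omega$ and $\Pi_{\tilde\phi}\otimes\eta$ must be compared away from $v_0$. Write $\rho=\pi'\otimes\eta$ and $\Pi=\pi\otimes\omega$ with $\pi,\pi'\in\Pi_{\tilde\phi}$. For each $v\neq v_0$, both $\pi_v$ and $\pi'_v$ lie in $\Pi_{\tilde\phi_v}$, and $\pi_v\otimes\omega_v\cong\pi'_v\otimes\eta_v$. By the local disjointness $\widetilde\Pi_{\phi_v,\tilde\chi}=\bigsqcup_{X/\alpha(\mathcal S_{\underline{\phi}_v})}\Pi_{\tilde\phi_v}\otimes\omega$ of Section~\ref{Xus-local}, we get $\omega_v\eta_v^{-1}\in\alpha(\mathcal S_{\phi_v})$ for all $v\neq v_0$. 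The task is to upgrade this to $\omega\eta^{-1}\in\alpha(\mathcal S_\phi)$ globally. At $v_1$, $\Pi_{v_1}$ is a twist of Steinberg, so $\phi_{v_1}$ restricted to $\mathrm{SL}_2$ is the principal (regular unipotent) parameter. Hence $\mathcal S_{\phi_{v_1}}$ is trivial, or at most central, and $\omega_{v_1}=\eta_{v_1}$ modulo the image of $\mathcal S_\phi$. I then want a global statement: a character of $\widetilde G(\A_L)/\widetilde G(L)Z(\A_L)G(\A_L)$ that is locally in $\alpha(\mathcal S_{\phi_v})$ at all places but one lies in $\alpha(\mathcal S_\phi)$. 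This is where I would invoke Kret--Shin~\cite{MR4556781}, which uses exactly the hypotheses that $\Pi_\infty$ is discrete series and $\Pi_{v_1}$ is Steinberg. Their result should identify the relevant twisting characters: the similitude characters $\omega$ with $\Pi\otimes\omega\cong\Pi$ come from the global component group. With $\phi$ fixed, Kret--Shin's description should also show that the global packet element of Xu's decomposition is determined by its components outside $v_0$, which gives $\eta=\omega$ modulo $\alpha(\mathcal S_\phi)$. Since the decomposition runs over $Y/\alpha(\mathcal S_\phi)$, this is equality of indices.

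\textbf{Step 3: $m(\rho)=1$.} With $(\psi,\eta)=(\phi,\omega)$ fixed, the multiplicity of $\rho$ is the number of pairs $(\pi,\omega')$ in the decomposition with $\pi\otimes\omega'\cong\rho$. Within $R(\phi,\omega)$ the packet $\Pi_{\tilde\phi}$ is a set of non-isomorphic representations, and distinct cosets in $Y/\alpha(\mathcal S_\phi)$ give non-isomorphic representations by Step 2. Hence $m(\rho)=1$. This also matches Xu's multiplicity formula, with the $\mathcal S_\phi$-character condition $\langle\cdot,\pi\otimes\omega\rangle=1$ imposed once.

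\textbf{Main obstacle.} The hard part is Step 2: passing from local coincidence of twists at all places $v\neq v_0$ to global coincidence modulo $\alpha(\mathcal S_\phi)$. A Hasse-principle-type failure is conceivable here, and it is precisely what Kret--Shin's analysis, under the Steinberg and discrete-series-at-infinity hypotheses, is needed to rule out. I would first try to reduce it to their theorem that the local packet at $v_1$, and the global twisting group, are as small as possible.
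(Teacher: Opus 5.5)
Your proposal follows essentially the same route as the paper: disjointness of local packets away from $v_0$ together with strong multiplicity one for $\mathrm{GL}_N$ gives $\psi=\phi$, and then the argument of Kret--Shin~\cite[Theorem 12.1]{MR4556781}, run for agreement away from $v_0$ (the paper likewise only cites this, as ``the same argument''), gives $\eta=\omega$. The one small difference is that you deduce $m(\rho)=1$ formally from $\psi=\phi$, $\eta=\omega$ and the multiplicity-free shape of Xu's decomposition, whereas the paper quotes multiplicity one directly from \cite[Theorem 12.1]{MR4556781}; both are fine.
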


\begin{prf}
	Since $\rho_v\cong \Pi_v$ for all places $v$ except for $v_0$, we have $\phi_v=\psi_v$ for $v\neq v_0$.  By the strong multiplicity one theorem for $\mathrm{GL}_N$, we can see $\phi=\psi$.  Thus, we can write of the form $\Pi=\pi_0\otimes\omega$ and $\rho=\pi\otimes\eta$ for some automorphic $\pi_0,\pi\in\Pi_{\tilde{\phi}}$.  By the same argument in \cite[Theorem 12.1]{MR4556781}, we can see $\omega=\eta$.  Also, \cite[Theorem 12.1]{MR4556781} shows $m(\rho)=1$.  
\end{prf}

Each local $L$-packet of $G(F)$ is invariant under the adjoint action by $\widetilde{G}(F)$.  Let $\pi$ be an irreducible representation of $\widetilde{G}(F)$ whose central character is $\widetilde{\chi}$.  If $\pi_0$ is an irreducible subrepresentation of the restriction $\pi|_{G(F)}$, we can see that $(\widetilde{\chi}\cdot\pi_0)(zg)=\widetilde{\chi}(z)\pi_0(g)\;(z\in Z_{\tilde{G}}(F),\;g\in G(F))$ is an irreducible $Z_{\tilde{G}}(F)G(F)$-subrepresentation of $\pi$.  Frobenius reciprocity shows that there is an injective $Z_{\widetilde{G}}(F)G(F)$-homomorphism $\pi|_{Z_{\widetilde{G}}(F)G(F)}\rightarrow \Res^{\widetilde{G}(F)}_{Z_{\widetilde{G}}(F)G(F)}\Ind^{\widetilde{G}(F)}_{Z_{\tilde{G}}(F)G(F)} (\widetilde{\chi}\cdot\pi_0) $.  On the other hand,  since $Z_{\tilde{G}}(F)G(F)$ is a normal subgroup of $\widetilde{G}(F)$, we have
\[
	 \Res^{\widetilde{G}(F)}_{Z_{\widetilde{G}}(F)G(F)}\Ind^{\widetilde{G}(F)}_{Z_{\tilde{G}}(F)G(F)} (\widetilde{\chi}\cdot\pi_0)\cong\bigoplus_{g\in \widetilde{G}(F)/Z_{\widetilde{G}}(F)G(F)}\widetilde{\chi}\cdot\pi^g_0
\]
by Mackey's formula.  Here, $\pi_0^g(\xi)=\pi_0(g\xi g^{-1})$.  Hence, we can define a map $\mathrm{Res}\colon \widetilde{\Pi}_{\phi_{v_0},\tilde{\chi}}\rightarrow \Pi_{\phi_{v_0}}/\widetilde{G}(F)$ by putting $\mathrm{Res}(\pi)=[\pi_0]$ in the above setting.  

\begin{prop}\label{inj}
	The map $\mathrm{Res}$ induces an injection $\Sigma\rightarrow \Pi_{\phi_{v_0}}/\widetilde{G}(F)$.  In particular, $\Sigma$ is finite.  
\end{prop}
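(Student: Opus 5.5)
We first check that the map is defined on $\Sigma$, i.e.\ that $\Sigma\subset\widetilde{\Pi}_{\phi_{v_0},\tilde{\chi}}$, and then prove injectivity using Lemma~\ref{multiplicity-one-and-kret-shin} and Xu's global decomposition.

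\emph{Well-definedness.} Let $\rho_{v_0}\in\Sigma$ come from an irreducible summand $\rho\subset L^2_{\mathrm{disc}}(\widetilde{G}(L)\backslash\widetilde{G}(\A_L),\widetilde{\chi})$ with $\rho^{v_0}\cong\Pi^{v_0}$. By Xu's decomposition, $\rho$ is a summand of some $R(\psi,\eta)$. Lemma~\ref{multiplicity-one-and-kret-shin} then gives $\psi=\phi$ and $\eta=\omega$, so $\rho=\pi\otimes\omega$ with $\pi\in\Pi_{\tilde{\phi}}$. By the first property of global packets, $\pi_{v_0}\in\Pi_{\tilde{\phi}_{v_0}}\subset\widetilde{\Pi}_{\phi_{v_0},\tilde{\chi}}$. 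Twisting by $\omega_{v_0}$, which is trivial on $G(F)$, preserves the restriction to $G(F)$. The central character of $\rho_{v_0}$ is $\widetilde{\chi}_{v_0}$. Hence $\rho_{v_0}\in\widetilde{\Pi}_{\phi_{v_0},\tilde{\chi}}$, and $\mathrm{Res}$ is defined on $\Sigma$. Here $\mathrm{Res}(\pi)$ is independent of the chosen constituent $\pi_0$, because by the Mackey computation all constituents of $\pi|_{G(F)}$ are $\widetilde{G}(F)$-conjugate.

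\emph{Injectivity.} Take $\rho_{v_0},\rho'_{v_0}\in\Sigma$ with $\mathrm{Res}(\rho_{v_0})=\mathrm{Res}(\rho'_{v_0})$. Their restrictions to $Z_{\widetilde{G}}(F)G(F)$ then share a constituent $\widetilde{\chi}\cdot\pi_0$. By Frobenius reciprocity both are quotients of $\Ind_{Z_{\widetilde{G}}(F)G(F)}^{\widetilde{G}(F)}(\widetilde{\chi}\cdot\pi_0)$. Since the quotient $\widetilde{G}(F)/Z_{\widetilde{G}}(F)G(F)$ is finite abelian, standard Clifford theory gives $\rho'_{v_0}\cong\rho_{v_0}\otimes\lambda$ for some $\lambda\in X$. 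Now write $\rho=\pi\otimes\omega$ and $\rho'=\pi'\otimes\omega$ with $\pi,\pi'\in\Pi_{\tilde{\phi}}$, as in the first step. Both $\pi_{v_0}$ and $\pi'_{v_0}$ lie in the single local packet $\Pi_{\tilde{\phi}_{v_0}}$, and $\pi'_{v_0}\cong\pi_{v_0}\otimes\lambda$. By the disjointness in Xu's local decomposition (property 1 of Section~\ref{Xus-local}), $\lambda\in\alpha(\mathcal{S}_{\underline{\phi}_{v_0}})$. By \cite[Proposition 6.28]{MR3568940}, this subgroup stabilizes every member of $\widetilde{\Pi}_{\phi_{v_0},\tilde{\chi}}$, so $\rho'_{v_0}\cong\rho_{v_0}\otimes\lambda\cong\rho_{v_0}$. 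This proves injectivity.

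\emph{Finiteness.} $\Pi_{\phi_{v_0}}$ is a finite Arthur packet, so $\Pi_{\phi_{v_0}}/\widetilde{G}(F)$ is finite, and hence so is $\Sigma$.

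The main obstacle is the passage from ``$\rho_{v_0}$ and $\rho'_{v_0}$ differ by some $\lambda\in X$'' to ``they lie in the same coset of the packet, so $\lambda$ is in the stabilizer.'' This step needs $\rho$ and $\rho'$ to come from the same global packet with the same twist $\omega$, which is exactly what Lemma~\ref{multiplicity-one-and-kret-shin} provides; the rest is the Clifford-theory bookkeeping above.
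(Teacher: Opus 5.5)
Your proof is correct and follows the paper's argument: Lemma~\ref{multiplicity-one-and-kret-shin} places $\rho$ and $\rho'$ in the same $R(\phi,\omega)$, so their $v_0$-components lie in $\Pi_{\tilde{\phi}_{v_0}}\otimes\omega_{v_0}$ and differ by a character in $X$, and property (i) of Xu's local packets then forces them to be isomorphic. The only differences are two. First, you derive the character-twist step yourself via Frobenius reciprocity and Clifford theory for the finite abelian quotient $\widetilde{G}(F)/Z_{\widetilde{G}}(F)G(F)$, where the paper cites \cite[Corollary 6.4]{MR3568940}. Second, you spell out the stabilizer argument, using that $\alpha(\mathcal{S}_{\underline{\phi}})$ acts trivially, which the paper compresses into a single appeal to property (i).
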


\begin{prf}
	By the strong multiplicity one theorem for Hecke characters, all members of $\Sigma$ has the same central character $\widetilde{\chi}$.  Therefore, Lemma~\ref{multiplicity-one-and-kret-shin} implies the inclusion $\Sigma\subset \widetilde{\Pi}_{\phi_{v_0},\tilde{\chi}}$.  

We will show that $\Res|_\Sigma$ is injective.  Suppose two cuspidal automorphic representations $\rho,\rho^\prime\subset L_\mathrm{disc}^2(\widetilde{G}(L)\backslash\widetilde{G}(\A_L),\widetilde{\chi})$ satisfying $\rho^{v_0}\cong (\rho^\prime)^{v_0}\cong\Pi^{v_0}$ whose $v_0$ components have isomorphic irreducible summands in thier restriction to $\mathrm{Sp}_{2n}(F)$.  That is $\mathrm{Res}(\rho_{v_0})=\mathrm{Res}(\rho^\prime_{v_0})$.  By the proof of Lemma~\ref{multiplicity-one-and-kret-shin}, there exist automorphic $\pi,\pi^\prime\in\Pi_{\tilde{\phi}}$ and $\eta\in Y$ such that $\rho=\pi\otimes\eta$ and $\rho^\prime=\pi^\prime\otimes\eta$.  Because $\pi_{v_0}|_{\mathrm{Sp}_{2n}(F)}$ and $\pi^\prime_{v_0}|_{\mathrm{Sp}_{2n}(F)}$ have isomorphic irreducible summands, \cite[Corollary 6.4]{MR3568940} shows that $\pi_{v_0}$ is a twist of $\pi^\prime_{v_0}$ by a character.  On the other hand, $\pi_{v_0},\pi^\prime_{v_0}\in \Pi_{\tilde{\phi}_{v_0}}$.  By the property (i) of Xu's local $L$-packets in Section~\ref{Xus-local}, we have $\pi_{v_0}\cong\pi^\prime_{v_0}$ hence $\rho_{v_0}\cong \rho^\prime_{v_0}$.  
\end{prf}

\section{Comparison of Xu's local packets and ours}

\subsection{Proof of the main result}

Let $e$ denote the Kottwitz sign of $H(F)$ (cf. \cite{MR0697075}).  We note that $e=(-1)^\frac{n(n+1)}{2}$ in our case.  

\begin{prop}\label{stability-of-our-packet}
	There exists a finite set $\Sigma^H$ of irreducible discrete series representations of $H(F)$ and positive integers $m(\tau)$ for each $\tau\in\Sigma^H$ satisfying
\[
	\sum_{\pi\in\Sigma}\Theta_\pi(g)=e\cdot\sum_{\tau\in\Sigma^H}m(\tau)\Theta_\tau(h)
\]
for all matching strongly regular semisimple elements $g\in \G(F)$ and $h\in H(F)$.  
\end{prop}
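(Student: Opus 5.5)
The plan is to start from the trace identity \eqref{eq-of-trace} and simplify both sides until they become an identity of distributions at $v_0$ that holds for every test function $f_{v_0}$ on $H(F)$ and its transfer $f^G_{v_0}$.

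\textbf{The $\G$ side.} By Lemma~\ref{multiplicity-one-and-kret-shin}, every $\rho$ contributing to the left-hand side has $m(\rho)=1$. By Proposition~\ref{inj}, the map $\rho\mapsto\rho_{v_0}$ is injective on the contributing $\rho$: two such $\rho$ with isomorphic $v_0$-components are isomorphic globally, and multiplicity one holds. So the left-hand side is exactly $\sum_{\pi\in\Sigma}\Tr\pi(f^G_{v_0})$, a finite sum.

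\textbf{The $H$ side.} I would define $\Sigma^H$ as the set of $\sigma_{v_0}$ with $\sigma$ as in the right-hand side. I would set $m(\tau)=\sum_{\sigma_{v_0}\cong\tau} m(\sigma)$, which is a positive integer. Finiteness of this sum should follow from the finiteness of automorphic representations of $H$ with fixed level away from $v_0$ and fixed infinitesimal character. Alternatively, one can argue by linear independence once the identity is established.

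\textbf{Discrete series on the $H$ side.} Each $\tau\in\Sigma^H$ should be a discrete series representation. I would deduce this from the identity itself. Since $H(F)$ is anisotropic modulo centre only in special cases, I would instead use the following argument. The left-hand side is a finite sum of discrete series characters. Expressing $\Tr\tau$ through Harish-Chandra's character expansion and using the fact that the transfer of the $\G$-side distribution is supported on elliptic elements, one can compare with Kazhdan's density/orthogonality. Concretely, I would pick $f_{v_0}$ to be pseudo-coefficients. Comparing elliptic inner products shows that every $\tau$ in the right-hand side has nonzero elliptic character, and Arthur's classification then forces $\tau$ to be discrete series up to twist. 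This step is the main obstacle. As a fallback, one can invoke the unitarity of $\sigma_{v_0}$ together with the vanishing of non-elliptic contributions, because $f_{v_1}$ is cuspidal.

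\textbf{From distributions to characters.} The identity $\sum_{\pi\in\Sigma}\Tr\pi(f^G_{v_0})=\sum_{\tau}m(\tau)\Tr\tau(f_{v_0})$ now holds for all $f_{v_0}$. Write both sides via the Weyl integration formula as integrals of $\Theta$ against orbital integrals. The transfer matches stable orbital integrals, with the Kottwitz sign $e$ coming from the normalization of transfer to the inner form (as in \cite[Section 4]{MR3267112}). Regular semisimple classes of $\G(F)$ that do not transfer have vanishing $f^G$-orbital integrals. Localizing near each strongly regular semisimple $h$, and using that the character sum on the $\G$ side is stable (it is invariant under stable conjugacy by Xu's result, once $\Sigma$ is identified, or directly from the identity), yields $\sum_{\pi\in\Sigma}\Theta_\pi(g)=e\sum_{\tau}m(\tau)\Theta_\tau(h)$ for matching $g$ and $h$.
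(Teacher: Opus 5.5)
\textbf{The gap is the discrete series step.} Your reduction of the $\G$ side to $\sum_{\pi\in\Sigma}\Theta_\pi$ is fine. So is your passage from the distribution identity to characters, by localizing at a matching pair $(g,h)$ and applying the Weyl integration formula. This is essentially the paper's argument: it takes normalized characteristic functions of small neighbourhoods of the classes of $g$ and $h$, with $e$ built into $f_{v_0}$. If you choose the transfer $f^G_{v_0}$ supported near a single rational class, you need no stability of $\sum_{\pi\in\Sigma}\Theta_\pi$ at all. Appealing to Xu's result ``once $\Sigma$ is identified'' would be circular, because Proposition~\ref{surj} is deduced from this statement.

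The gap is the step you flag yourself: showing every $\tau\in\Sigma^H$ is a discrete series representation. Your sketch does not work, for these reasons.
\begin{itemize}
\item A nonzero elliptic character does not force square-integrability. Twists of the trivial representation have nonzero elliptic characters, and so do tempered elliptic representations that are not square-integrable.
\item Elliptic inner products cannot detect a $\tau$ irreducibly induced from a proper parabolic, because its character vanishes on the elliptic set.
\item No Arthur-type classification is available for $\mathrm{GU}_n(D)$. If one were, the paper would be unnecessary.
\item The $\G$-side distribution is not supported on elliptic elements. Pseudo-coefficients are, but discrete series characters do not vanish off the elliptic set.
\item The fallback does not help. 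Unitarity of $\sigma_{v_0}$ allows non-tempered components. Cuspidality of $f_{v_1}$ only simplifies the global trace formula and says nothing about the type of $\sigma_{v_0}$.
\end{itemize}

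\textbf{The missing idea} is to use the character identity at non-elliptic matching elements, which your localization already provides, together with Casselman's theory, as in \cite[Section 21]{MR0876160}. Levi subgroups of $H$ correspond to Levi subgroups of $\G$, with matching split centres $A_M$. On sufficiently $P$-contracting regular elements of a Levi, the character of a representation equals that of its Jacquet module. Since every $m(\tau)$ is positive, the Jacquet modules on the $H$ side add as genuine representations, so no exponent can cancel. Hence the central exponents of each $\tau$ occur among those of the members of $\Sigma$. These satisfy Casselman's criterion because the members of $\Sigma$ are discrete series. Combined with unitarity of $\tau$, as a local component of a discrete automorphic representation, this gives square-integrability modulo the centre. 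This step should come after the character identity is established, not before.

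\textbf{A smaller point on finiteness.} The sum defining $m(\tau)$ has only one term, since $\sigma$ is determined by $\sigma_{v_0}$ together with $\Pi^{v_0,v_1}$ and $\mu_1\mathrm{St}^H_{v_1}$. Finiteness of $\Sigma^H$ itself does not follow from bounding the level away from $v_0$. Only for a fixed test function at $v_0$ are the contributing $\sigma$ finite in number.
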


\begin{prf}
We choose a specific pair of matching test functions $f_{v_0}^G$ and $f_{v_0}$ as follows.  We note that automorphic representations contributing the both sides of the equation (\ref{eq-of-trace}) are not necessarily finite, but for a fixed pair of matching test functions, they are finite.  For a fixed pair of matching strongly regular semisimple elements $g\in\mathrm{GSp}_{2n}(F)$ and $h\in H(F)$, let $T$ be the maximal torus of $H$ containing $h$ and $T^G$ be the maximal torus of $\G$ containing $g$.  We define two maps, which are finite-to-one, $\phi^G$ and $\phi$ by
\[
	\phi^G\colon \G(F)/T^G(F)\times {T^G(F)}^\mathrm{srs}\rightarrow \G(F)^{\mathrm{srs}};\;(\xi,t)\mapsto \xi t\xi^{-1},
\]
and 
\[
	\phi\colon H(F)/T(F)\times {T(F)}^\mathrm{srs}\rightarrow H(F)^{\mathrm{srs}};\;(\eta,s)\mapsto \eta s\eta^{-1}.
\]

Here we write $(\cdot)^{\mathrm{srs}}$ for the subset of the strongly regular semisimple elements.  Now we take compact open subsets $C^G\subset \G(F)/T^G(F)$ and $C\subset H(F)/T(F)$ such that  intersections with thier rational Weyl groups $C^G\cap W_F(\G,T^G)$ and $C\cap W_F(H,T)$ are trivial.  Here, the rational Weyl groups are defined as $W_F(\G,T^G)=N_{\G(F)}(T^G(F))/T^G(F)$.  We take Haar measures on $G(F)$ and $H(F)$ such that $\mathrm{vol}(C^G)=\mathrm{vol}(C)=1$.  Since $g$ and $h$ are matching, there exists an isomorphism $\psi\colon H(\overline{F})\rightarrow \mathrm{GSp}_{2n}(\overline{F})$ which maps $h$ to $g$ and of course $\psi(T)=T^G$.  Via this isomorphism, we set Haar measures on $T^G(F)$ and $T(F)$ which are compatible.  We also take compact open neighborhoods $V^G$ of $g$ in $T^G(F)$ and $V$ of $h$ in $T(F)$ which satisfy
\begin{itemize}
	\item for any Weyl elemement $w\in W_F(\G,T^G)$, $wV^G\cap V^G=\emptyset$, 
	\item for any Weyl elemement $w\in W_F(H,T)$, $wV\cap V=\emptyset$, 
	\item the character functions $\Theta_{\pi}$ are constant on $V^G$ for each $\pi\in \Sigma$, 
	\item the norm of Weyl discriminant of $\G$ denoted by $D_{\G}^2$ is constant on $V^G$, and
	\item the norm of Weyl discriminant of $H$ denoted by $D_H^2$ is constant on $V$.  
\end{itemize}
Moreover, we can take them as $\psi(V)=V^G$ and in this situation, the volumes of them are equal by our setting of Haar measures.  

Now we put $f^G_{v_0}$ to be the characteristic function of $\phi^G(C^G\times V^G)$ multiplied by $|W_F(\G,T^G)|$ and $f_{v_0}$ to be that of $\phi(C\times V)$ multiplied by $e|W_F(H,T)|$.  Then, it is easy to see that $f^G_{v_0}$ and $f_{v_0}$ have matching orbital integrals.  This is because $\delta\in\phi(C\times V)$ is matching to $\gamma\in \mathrm{GSp}_{2n}(F)^{\mathrm{srs}}$ if and only if the stable conjugacy orbit of $\gamma$ intersects $\phi^G(C^G\times V^G)$ and because in this case, the intersection of the stable conjugacy orbit of $\gamma$ (resp. $h$) and the support of $f^G_{v_0}$ (resp. $f_{v_0}$) has just one element.  

For this pair of matching test function, we write $\Sigma^H$ for the set of $v_0$-component of the automorphic representations contributing the right hand sides of the equation (\ref{eq-of-trace}).  From the Weyl integration formula, thr left hand side of the equation (\ref{eq-of-trace}) for the above test function $f_{v_0}^G$ is 
\[
	\sum_{\pi\in\Sigma}\frac{1}{|W_F(\G,T^G)|}\int_{T^G(F)}D_{\G}(t)^2\biggl(\int_{\G(F)/T^G(F)}f_{v_0}^G(\xi t\xi^{-1})\Theta_\pi(t)d\xi\biggr)dt.  
\]
In our setting, $\displaystyle \frac{1}{|W_F(\G,T^G)|}\int_{\G(F)/T^G(F)}f_{v_0}^G(\xi t\xi^{-1})\Theta_\pi(t)d\xi=\Theta_\pi(t)$.  
From a similar computation for the right hand side of the equation (\ref{eq-of-trace}), we have 
\[
	\sum_{\pi\in\Sigma}\mathrm{vol}(V^G)\Theta_{\pi}(g)=e\sum_{\tau\in\Sigma^H}\int_{T(F)}\chi_V(s)m(\tau)\Theta_{\tau}(s)ds
\]
Here, $\chi_V$ denotes the characteristic function on $V$ and $m(\tau)$ are some positive integers.  We note that the Weyl discriminants of $\G$ and $H$ are the same since $H$ is an inner form of $\G$.  We shrink $V$ and $V^G$ as for all $\pi^H\in\Sigma^H$, $\Theta_{\pi^H}$ are constant on $V$.  Then, we have the equation as stated in the proposition.  

By the same argument in \cite[Section 21]{MR0876160}, that is, considering the central exponents and applying Casselman's criterion for square integrabilitiy, we can see that all members of $\Sigma^H$ are discrete series representations since those of $\Sigma$ are so.  
\end{prf}

\begin{prop}\label{surj}
	For a pair of a parameter and a character $(\phi,\omega)$ fixed before Lemma~\ref{multiplicity-one-and-kret-shin}, we have $\Sigma=\Pi_{\tilde{\phi}_{v_0}}\otimes\omega_{v_0}$.  
\end{prop}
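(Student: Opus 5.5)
We need to show $\Sigma=\Pi_{\tilde{\phi}_{v_0}}\otimes\omega_{v_0}$. The inclusion $\Sigma\subset\Pi_{\tilde{\phi}_{v_0}}\otimes\omega_{v_0}$ follows from the proof of Lemma~\ref{multiplicity-one-and-kret-shin}. For $\rho\subset L^2_{\mathrm{disc}}$ with $\rho^{v_0}\cong\Pi^{v_0}$, that lemma gives $\rho\subset R(\phi,\omega)$, so $\rho=\pi\otimes\omega$ with $\pi\in\Pi_{\tilde{\phi}}$. Property (i) of the global packet then gives $\pi_{v_0}\in\Pi_{\tilde{\phi}_{v_0}}$, hence $\rho_{v_0}\in\Pi_{\tilde{\phi}_{v_0}}\otimes\omega_{v_0}$.

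For the reverse inclusion I would construct the needed global members directly. Take $\pi'_{v_0}\in\Pi_{\tilde{\phi}_{v_0}}$. Xu's local packets are in bijection with characters of the local component group $\mathcal{S}_{\phi_{v_0}}$ modulo the appropriate quotient, via the restriction to $\mathrm{Sp}_{2n}(F)$; Proposition~\ref{inj} reflects this injectivity. Let $\pi=\Pi\otimes\omega^{-1}\in\Pi_{\tilde{\phi}}$ and form $\pi'=\pi'_{v_0}\otimes\pi^{v_0}$. By Xu's description of global packets in \cite[Theorem 1.1]{MR4887967}, $\Pi_{\tilde{\phi}}$ consists of restricted tensor products of local members, so $\pi'\in\Pi_{\tilde{\phi}}$. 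It remains to show that $\pi'\otimes\omega$ satisfies the multiplicity condition $\langle\cdot,\pi'\otimes\omega\rangle=1$ on the global component group $\mathcal{S}_\phi$, which then gives $\pi'\otimes\omega\subset R(\phi,\omega)$ and so $\pi'_{v_0}\otimes\omega_{v_0}\in\Sigma$.

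The main obstacle is this last condition, because the local character at $v_0$ changes. My first attempt uses the auxiliary place $v_2$, where no condition was imposed in globalization. Since the global character $\langle\cdot,\pi\otimes\omega\rangle$ is a product of local characters, I would adjust the $v_2$-component to compensate. The trouble is that this changes $\rho^{v_0}$, and $\Sigma$ requires $\rho^{v_0}\cong\Pi^{v_0}$ exactly.

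The better route is therefore to exploit the Steinberg component at $v_1$ together with the discrete series at infinity, as in Kret--Shin \cite{MR4556781}. The parameter $\phi$ is then a single irreducible parameter with $\mathcal{S}_\phi$ trivial, or at least with $\mathcal{S}_\phi$ mapping trivially into the relevant quotient. This is exactly the input in \cite[Theorem 12.1]{MR4556781}: the Steinberg condition at $v_1$ forces $\phi$ to be cuspidal-simple of $\mathrm{GL}_{2n+1}$ type. With $\mathcal{S}_\phi$ trivial modulo centre, the multiplicity condition is automatic for every member of $\Pi_{\tilde{\phi}}\otimes\omega$ (the same mechanism that gave $m(\rho)=1$), so every $\pi'\otimes\omega$ occurs in $L^2_{\mathrm{disc}}$.

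I would therefore verify that $\phi$ is simple, meaning its $\mathrm{GL}_{2n+1}$ transfer is cuspidal because the local component at $v_1$ is the Steinberg parameter, an irreducible $\mathrm{SL}_2$ representation. I would then check that the resulting global character pairing is trivial. Combined with the finiteness in Proposition~\ref{inj}, this yields equality.
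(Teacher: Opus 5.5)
Your argument is correct, but it proves the reverse inclusion by a different mechanism from the paper.

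The paper argues locally. Proposition~\ref{stability-of-our-packet}, obtained from the trace formula comparison with $H$, makes $\Theta_\Sigma$ stable. Restricting to $\mathrm{Sp}_{2n}(F)$ and using atomic stability of discrete $L$-packets of $\mathrm{Sp}_{2n}(F)$ makes $\mathrm{Res}|_\Sigma$ surjective, so $\widetilde{\Pi}_{\phi,\tilde{\chi}}=\bigsqcup_{\omega}\Sigma\otimes\omega$. Xu's uniqueness of packets with properties (i) and (ii) then identifies $\Sigma$.

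You instead stay on the $\mathrm{GSp}_{2n}$ side and use the global multiplicity formula directly:
\begin{itemize}
\item Since $\Pi_{v_1}|_{\mathrm{Sp}_{2n}}$ is the Steinberg representation, $\phi_{v_1}$ is the irreducible $(2n+1)$-dimensional principal parameter. Hence $\phi=\boxplus_i\phi_i$ has only one summand.
\item By Schur, the centralizer of an irreducible orthogonal representation in $\mathrm{SO}_{2n+1}(\mathbb{C})$ is $\{\pm 1\}\cap\mathrm{SO}_{2n+1}(\mathbb{C})=1$.
\item Hence $\mathcal{S}_\phi=1$, and the condition $\langle\cdot,\pi'\otimes\omega\rangle=1$ is vacuous. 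This is the mechanism Kret--Shin use to realize every member of the archimedean packet; you apply it at $v_0$.
\end{itemize}

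Three points to tighten:
\begin{itemize}
\item State $\mathcal{S}_\phi=1$ and the simplicity of $\phi$ outright, with the argument above, rather than hedging with ``or at least \ldots''. Drop the abandoned $v_2$ attempt.
\item The step $\pi'_{v_0}\otimes\pi^{v_0}\in\Pi_{\tilde{\phi}}$ needs the equality $\Pi_{\tilde{\phi}}=\otimes'_v\Pi_{\tilde{\phi}_v}$ from Xu's global theorem. Property (i) as quoted in the paper only gives a containment. You should cite the precise statement.
\item The final appeal to finiteness in Proposition~\ref{inj} is superfluous, since your two inclusions already give equality.
\end{itemize}

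What your route buys: it is shorter and avoids both the comparison with the inner form and Varma's atomic stability result. It also shows more: every member of $\Pi_{\tilde{\phi}_{v_0}}\otimes\omega_{v_0}$ is the $v_0$-component of a discrete automorphic representation agreeing with $\Pi$ away from $v_0$.

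What the paper's route buys: it needs neither the tensor-product form of Xu's global packets nor the computation of $\mathcal{S}_\phi$. It characterizes $\Sigma$ intrinsically through the stability of $\Theta_\Sigma$ together with Xu's local uniqueness.
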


\begin{prf}
	By Proposition~\ref{stability-of-our-packet}, the sum of characters $\displaystyle \Theta_\Sigma=\sum_{\pi\in\Sigma}\Theta_\pi$ is stably invariant.  Hence, one can see that the sum of trace distributions $\displaystyle \mathrm{Tr}_\Sigma=\sum_{\pi\in\Sigma} \mathrm{Tr}_\pi$ is a stable distribution.  This is due to the stable version of the Weyl integration formula.  For a local test function $f_{v_0}^G$ such that $SO(f_{v_0}^G)$ vanishes at every strongly regular semisimple element, the stable version of the formula induces the following equation: 
\[
	\mathrm{Tr}_\Sigma(f_{v_0}^G)=\sum_{T^G}\frac{1}{|W(T^G,\G)(F)|}\int_{{T^G(F)}^\mathrm{srs}}D_{\G}(t)^2 SO_t(f_{v_0}^G) \Theta_\Sigma(t) dt.  
\]
Here, $SO$ denotes the stable orbital integral (which is not normalized by the quare root of the norm of Weyl discriminant in this paper), i.e. 
\[
	SO_t(f_{v_0}^G)=\int_{{\G}(F)/{\G}(F)_t} f_{v_0}^G(\xi t\xi^{-1})d\xi.
\]
Moreover, $T^G$ runs through the stable conjugacy classes of maximal tori of $\G$ and $W(T^G,\G)=N_{\G}(T^G)/T^G$.  Since the stable orbital integral vanishes, $\mathrm{Tr}_\Sigma(f_{v_0}^G)=0$, hence $\mathrm{Tr}_\Sigma$ is stable.  

By restricting the function $\Theta_\Sigma$ to $\mathrm{Sp}_{2n}(F)$, we can have a stably invarinat function which is a linear combination of characters of some members in $\Pi_{\phi_{v_0}}$.  On the other hand, each discrete $L$-packets of $\mathrm{Sp}_{2n}(F)$ is atomically stable, that is a some linear combination of trace characters of the all members in each $L$-packet and no proper subset of the packet does not have this property (see \cite[Proposition 7.2.2]{some-comment}).  Therefore, the map $\mathrm{Res}$ in Proposition~\ref{inj} is also surjective.  By \cite[Corollary 6.4]{MR3568940} and Proposition~\ref{inj}, we can see 
\[
	 \widetilde{\Pi}_{\phi,\tilde{\chi}}=\bigsqcup_{\omega\in X/\alpha(\mathcal{S}_{\underline{\phi}})}\Sigma\otimes \omega.
\]
From the uniqueness up to character twists of the Xu's local packets satisfying two properties (i) and (ii) in Section 3.1, the proposition follows.  
\end{prf}

\begin{thm}\label{main}
	Let $\pi_G$ be an irreducible discrete series representation of $\G(F)$ and $\Sigma_0$ be the discrete $L$-packet containing $\pi_G$ defined by Xu.  Then, there exist a finite set $\Sigma_0^H$ of isomorphism classes of irreducible discrete series representations of $\mathrm{GU}_n(D)$ and positive integers $m(\tau)$ for each $\tau\in\Sigma_0^H$ such that 
\[
	\sum_{\pi\in\Sigma_0} \Theta_\pi(g)=e\cdot\sum_{\tau\in\Sigma_0^H}m(\tau)\Theta_\tau(h)
\]
for any pair of matching strongly regular semisimple elements $g\in\G(F)$ and $h\in\mathrm{GU}_n(D)$.
\end{thm}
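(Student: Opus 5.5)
The plan is to reduce Theorem~\ref{main} to Propositions~\ref{stability-of-our-packet} and \ref{surj}, which were proved for the twisted representation $\mu_0\otimes\pi_G$ sitting inside the global setup. First, fix the globalization $(\Pi,\mu_0,\mu_1)$ for the given $\pi_G$. Then $\Pi_{v_0}\cong\mu_0\otimes\pi_G$, and we write $\Pi$ as a summand of $R(\phi,\omega)$. By Proposition~\ref{surj}, $\Sigma=\Pi_{\tilde\phi_{v_0}}\otimes\omega_{v_0}$.

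The first thing to check is that this is exactly the twist $\mu_0\otimes\Sigma_0$ of Xu's packet containing $\pi_G$. Since Xu's packets are only determined up to twisting by $X$, I would proceed as follows.
\begin{itemize}
\item Note that $\Sigma$ satisfies properties (i) and (ii) of Section~\ref{Xus-local} for the character $\widetilde\chi$, and contains $\mu_0\otimes\pi_G$.
\item Twist by $\mu_0^{-1}$. Twisting by a character of the similitude preserves stability of the summed trace distribution, since characters of $\mu$ are constant on stable classes. It also preserves the decomposition (i), with $\widetilde\chi$ replaced by $\widetilde\chi\mu_0^{-1}|_{Z}$.
\item Conclude that $\mu_0^{-1}\otimes\Sigma$ is an $X$-twist-class representative satisfying (i) and (ii) and containing $\pi_G$.
\item Use the disjointness in (i) to see that this forces $\mu_0^{-1}\otimes\Sigma=\Sigma_0$: the packet in the chosen $X$-orbit that contains $\pi_G$ is unique, since the translates $\Pi_{\tilde\phi}\otimes\omega$ are disjoint.
\end{itemize}

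Next, apply Proposition~\ref{stability-of-our-packet}. This gives a finite set $\Sigma^H$ of discrete series of $H(F)=\mathrm{GU}_n(D)$, and multiplicities $m(\tau)>0$, with
\[\sum_{\pi\in\Sigma}\Theta_\pi(g)=e\sum_{\tau\in\Sigma^H}m(\tau)\Theta_\tau(h).\]
Put $\Sigma_0^H=\mu_0^{-1}\otimes\Sigma^H$, viewing $\mu_0$ as a character of $H(F)$ through its similitude. For matching $g$ and $h$ we have $\mu(g)=\mu(h)$, since the inner twist identifies the similitude characters. Multiplying the identity by $\mu_0^{-1}(\mu(g))=\mu_0^{-1}(\mu(h))$ then gives the stated identity for $\Sigma_0$ and $\Sigma_0^H$. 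Twisting preserves being an irreducible discrete series, and the multiplicities carry over unchanged.

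One subtlety is that $\Sigma^H$ in Proposition~\ref{stability-of-our-packet} was constructed using test functions that depend on the pair $(g,h)$. So a priori $\Sigma^H$ and the $m(\tau)$ could vary with the pair. This is the step I expect to be the main obstacle, and I would handle it as follows.
\begin{itemize}
\item Take $\Sigma^H$ to be the set of all $v_0$-components $\sigma_{v_0}$ of automorphic $\sigma$ in the right-hand side of (\ref{eq-of-trace}), and $m(\tau)$ the sum of the $m(\sigma)$ with $\sigma_{v_0}\cong\tau$. These depend only on the global data.
\item Show that this set is finite. Compare the right-hand side of (\ref{eq-of-trace}) with the left-hand side, which is a finite sum since $\Sigma$ is finite by Proposition~\ref{inj}. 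The right-hand side then defines, on the Hecke algebra of $H(F)$, the linear combination $e^{-1}\mathrm{Tr}_\Sigma$ transferred to $H(F)$; that is, $\sum_\tau m(\tau)\mathrm{Tr}_\tau(f)$ equals $e\,\mathrm{Tr}_\Sigma(f^G)$.
\item Test against $f$ running over $K$-biinvariant functions for a small open compact $K$. The left side has bounded total dimension of $K$-fixed vectors, so only finitely many $\tau$ have $\tau^K\neq0$. Letting $K$ shrink, use the $v_0$-level bounds of the discrete spectrum with fixed central character and fixed components away from $v_0$ to conclude that $\Sigma^H$ is finite.
\item Conclude that the identity $\sum_\tau m(\tau)\mathrm{Tr}_\tau(f)=e\,\mathrm{Tr}_\Sigma(f^G)$ holds for all matching pairs $(f,f^G)$. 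Linear independence of characters then yields the pointwise identity of Harish-Chandra characters on matching strongly regular semisimple elements.
\end{itemize}
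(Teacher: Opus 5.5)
Your reduction is the paper's proof. Proposition~\ref{surj} gives $\Sigma=\Pi_{\tilde\phi_{v_0}}\otimes\omega_{v_0}$, hence $\Sigma_0=\Sigma\otimes\mu_0^{-1}$, and one puts $\Sigma_0^H=\Sigma^H\otimes\mu_0^{-1}$ with $\Sigma^H$ from Proposition~\ref{stability-of-our-packet}. Your justification of $\Sigma_0=\mu_0^{-1}\otimes\Sigma$ is correct: similitude twists preserve property (i) and stability, and disjointness of the $X$-translates singles out the one containing $\pi_G$. So is your derivation of the twisted identity from $\mu(g)=\mu(h)$; both spell out what the paper leaves implicit.

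The extra ``main obstacle'' is not part of the paper's argument. The paper simply cites Proposition~\ref{stability-of-our-packet}, whose \emph{statement} already supplies one finite $\Sigma^H$ valid for all matching pairs. You are right that the \emph{proof} of that proposition selects $\Sigma^H$ through test functions depending on $(g,h)$, and then shrinks $V$, which changes the test function. However, your repair has a genuine gap.
\begin{itemize}
\item Testing against the idempotent $e_K$ only gives $\sum_\tau m(\tau)\dim\tau^K=\mathrm{Tr}_\Sigma(e_K^G)$. This bound depends on $K$ and is unbounded as $K$ shrinks. Finiteness for each fixed $K$ is automatic anyway from Harish-Chandra's finiteness theorem.
\item The decisive step is a ``$v_0$-level bound'' for the discrete spectrum of $H$ with fixed components away from $v_0$. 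This is not an available result: it amounts to finiteness of global packets on the inner form, which is exactly what is at stake. On the $\G$ side the corresponding finiteness (Proposition~\ref{inj}) came only from Xu's classification and Kret--Shin, and there is no counterpart for $H$ here.
\item Once a fixed finite set is available, the pointwise identity comes from the Weyl integration formula and local constancy of characters, not from linear independence of characters.
\end{itemize}

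One route that does give finiteness, after twisting so that $\widetilde\chi$ is unitary, is the following; it requires knowing that every $\tau$ is a discrete series.
\begin{itemize}
\item Evaluate both sides of (\ref{eq-of-trace}) at $f_{v_0}=f_\tau$, a pseudo-coefficient, and a transfer of it. This gives $m(\tau)=\langle\Theta^H,\Theta_\tau\rangle_{H,e}$, where $\Theta^H$ is $e\,\Theta_\Sigma$ transported to $H(F)^{\mathrm{srs}}$ along matching.
\item Bessel's inequality for the orthonormal discrete series characters, together with the stable-pairing comparison used in Lemma~\ref{no-crossing}, then gives $\sum_\tau m(\tau)^2\le\langle\Theta_\Sigma,\Theta_\Sigma\rangle_{\G,e}=|\Sigma|$, as in Lemma~\ref{single}.
\item So the full set of $v_0$-components is finite, and the localisation argument of Proposition~\ref{stability-of-our-packet} can be run with this fixed set.
\end{itemize}
The discrete-series property itself (the Flicker--Kazhdan step) would then have to be proved without presupposing finiteness.
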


\begin{prf}
	By Proposition~\ref{surj}, $\Sigma_0=\Pi_{\tilde{\phi}_{v_0}}\otimes \omega_{v_0}\mu_0^{-1}=\Sigma\otimes\mu_0^{-1}$.  Hence, if we take a discrete packet $\Sigma^H$ of $H(F)$ satisfying the property of Proposition~\ref{stability-of-our-packet} and if we put $\Sigma_0^H=\Sigma^H\otimes\mu_0^{-1}$, the character relation holds.  
\end{prf}

By restricting the above character relation to $\mathrm{Sp}_{2n}(F)$ and its inner form $\mathrm{U}_n(D)$, we have the following analogous result for $\mathrm{Sp}_{2n}$.  

\begin{cor}
	For every discrete $L$-packet $\Pi_\psi$ of $\mathrm{Sp}_{2n}(F)$, there exist a finite set $\Pi_\psi^H$ of isomorphism classes of irreducible discrete series representations of $\mathrm{U}_n(D)$ and positive integers $m^\prime(\pi)$ and $m^\prime(\tau)$ such that 
\[
	\sum_{\pi\in\Pi_\psi} m^\prime(\pi)\Theta_\pi(g)=e\cdot\sum_{\tau\in\Pi_\psi^H}m^\prime(\tau)\Theta_\tau(h)
\]
for any pair of matching strongly regular semisimple elements $g\in\mathrm{Sp}_{2n}(F)$ and $h\in\mathrm{U}_n(D)$.
\end{cor}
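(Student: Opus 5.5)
The plan is to derive the corollary from Theorem~\ref{main} by restricting both sides of the character identity to the derived groups, $\mathrm{Sp}_{2n}(F)\subset\G(F)$ and $\mathrm{U}_n(D)\subset\mathrm{GU}_n(D)$.

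First I choose a discrete series representation $\pi_G$ of $\G(F)$ whose restriction to $\mathrm{Sp}_{2n}(F)$ contains a member of $\Pi_\psi$. Such a $\pi_G$ exists because every discrete series representation of $\mathrm{Sp}_{2n}(F)$ occurs in the restriction of some irreducible representation of $\G(F)$, obtained for instance as a subquotient of an induced representation. That representation is again discrete series, since $\G(F)/Z(F)\mathrm{Sp}_{2n}(F)$ is compact modulo the center. Let $\Sigma_0$ be Xu's packet containing $\pi_G$, and apply Theorem~\ref{main} to get $\Sigma_0^H$ and the multiplicities $m(\tau)$.

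Next I restrict characters. For $\pi\in\Sigma_0$ and a strongly regular semisimple $g\in\mathrm{Sp}_{2n}(F)$ (regular in $\mathrm{Sp}_{2n}$ implies regular in $\G$ after multiplying by central elements), Clifford theory together with the finite length of $\pi|_{\mathrm{Sp}_{2n}(F)}$ gives $\Theta_\pi(g)=\sum_{\sigma}\langle\pi|_{\mathrm{Sp}_{2n}},\sigma\rangle\Theta_\sigma(g)$. By the definition of $\widetilde{\Pi}_{\phi,\tilde\chi}$ in Section~\ref{Xus-local}, every $\sigma$ occurring here lies in $\Pi_\psi$. Conversely, every member of $\Pi_\psi$ occurs for some $\pi\in\Sigma_0$: the argument in the proof of Proposition~\ref{surj} (surjectivity of $\mathrm{Res}$ together with $\widetilde{G}(F)$-stability of $\Pi_\psi$) shows this, so the resulting coefficients $m'(\pi)$ are positive integers.

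On the inner form side I do the same for $\tau\in\Sigma_0^H$. The constituents of $\tau|_{\mathrm{U}_n(D)}$ are discrete series, by the same compactness argument, and their multiplicities combine with $m(\tau)$ into positive integers $m'(\tau)$. I define $\Pi^H_\psi$ to be the set of these constituents.

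Finally I check that matching is preserved. If $g\in\mathrm{Sp}_{2n}(F)$ and $h\in\mathrm{U}_n(D)$ match, then they match as elements of $\G(F)$ and $\mathrm{GU}_n(D)$, because the inner twist restricts to the derived groups and the similitudes are both $1$. So evaluating the identity of Theorem~\ref{main} at $(g,h)$ gives the stated equation. The Kottwitz sign is unchanged, since the split ranks of the derived groups are the same.

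The main obstacle is making sure the left-hand side is exactly $\Pi_\psi$, and not a larger union of $\mathrm{Sp}_{2n}$-packets or a proper subset of $\Pi_\psi$. For this I rely on the fact that $\Sigma_0\subset\widetilde\Pi_{\phi,\tilde\chi}$ restricts into the single packet $\Pi_\phi=\Pi_\psi$, and on the surjectivity established in Proposition~\ref{surj}.
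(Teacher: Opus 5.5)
Your proposal is correct and follows the paper's own route: restrict the identity of Theorem~\ref{main} to $\mathrm{Sp}_{2n}(F)$ and $\mathrm{U}_n(D)$, then expand each restricted character as a sum of characters of its constituents. You also make explicit what the paper leaves implicit: the existence of a discrete series $\pi_G$ lying over a member of $\Pi_\psi$, the positivity of every $m'(\pi)$ via the surjectivity of $\mathrm{Res}$ from Proposition~\ref{surj}, and the fact that matching elements of the derived groups also match in the similitude groups.
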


\subsection{Generic representations in a discrete $L$-packet of $\G(F)$}
We now remark on generic representations in an $L$-packet of $\G(F)$.  More results for generic representations of $\G(F)$ from the view of restriction to $\mathrm{Sp}_{2n}$, see \cite[Section 6]{MR3568940}.  

Let $\widetilde{B}=\widetilde{T}N$ be the group of $F$-valued points of a Borel subgroup of $\G$ with its Levi decomposition.  Note that $N$ is a maximal unipotent subgroup of both $\G(F)$ and $\mathrm{Sp}_{2n}(F)$

\begin{lem}\label{existence-of-generic-reps}
	Let $\psi$ be a genric  character of $N$.  Then, $\pi$ is an irreducible generic representation of $\G(F)$ with respect to $(N,\psi)$ if and only if its restriction $\pi|_{\mathrm{Sp}_{2n}(F)}$ contains an irreducible generic subrepresentation of $\mathrm{Sp}_{2n}(F)$ with respect to $(N,\psi)$ with multiplicity one.  
\end{lem}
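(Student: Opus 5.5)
The plan is to compare Whittaker functionals for $\G(F)$ and for $\mathrm{Sp}_{2n}(F)$ directly, using the fact that $N$ is the same unipotent group in both. Write $G^1=Z_{\G}(F)\mathrm{Sp}_{2n}(F)$, a normal subgroup of finite index in $\G(F)$ (finite since $F^\times/F^{\times 2}$ is finite for $p>2$). Let $\pi$ be an irreducible admissible representation of $\G(F)$. Then $\pi|_{G^1}$ is a finite direct sum $\bigoplus_{g}\pi_0^g$ of conjugates of one irreducible $\pi_0$, each with a common multiplicity $m$; this comes from the Mackey/Frobenius argument before Proposition~\ref{inj}. The central character makes the restrictions to $G^1$ and to $\mathrm{Sp}_{2n}(F)$ interchangeable. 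We then have
\[
\Hom_N(\pi,\psi)=\bigoplus_{g}\Hom_N(\pi_0^g,\psi)^{\oplus m},
\]
so $\pi$ is $(N,\psi)$-generic if and only if some summand of its restriction is $(N,\psi)$-generic.

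For the multiplicity statement, recall two facts. First, each $\pi_0^g$ is generic for at most one character $\psi$ up to $\widetilde{T}\cap\mathrm{Sp}_{2n}(F)$-conjugacy. Second, $\dim\Hom_N(\pi_0^g,\psi)\le 1$ by uniqueness of Whittaker models for $\mathrm{Sp}_{2n}$. The torus $\widetilde{T}$ acts transitively on generic characters of $N$ (the similitude part supplies the missing square classes). Hence the elements of $\G(F)/G^1$, which can be represented by elements of $\widetilde{T}$, permute the $(N,\psi)$-generic constituent among the classes of generic characters. Concretely, $\pi_0^t$ is $(N,\psi)$-generic if and only if $\pi_0$ is $(N,\psi^{t^{-1}})$-generic. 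Since representatives $t$ in distinct cosets give characters $\psi^{t^{-1}}$ in distinct $\mathrm{Sp}_{2n}$-orbits, and $\pi_0$ is generic for at most one orbit, exactly one conjugate $\pi_0^g$ is $(N,\psi)$-generic.

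Next, uniqueness of Whittaker models for $\G(F)$ gives $\dim\Hom_N(\pi,\psi)\le1$. Combined with the displayed decomposition, the generic summand must then occur with multiplicity $m=1$. This proves the forward direction. Conversely, if $\pi|_{\mathrm{Sp}_{2n}(F)}$ contains an $(N,\psi)$-generic constituent, even with multiplicity one, the displayed equality gives $\Hom_N(\pi,\psi)\neq0$, so $\pi$ is generic.

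The main obstacle is the claim that distinct cosets of $\G(F)/G^1$ move $\psi$ into distinct $\mathrm{Sp}_{2n}(F)$-orbits of generic characters. I would verify this by identifying both sets with $F^\times/F^{\times2}$ via the similitude $\mu$ restricted to $\widetilde{T}$. An alternative would be to simply deduce the multiplicity-one part from $\dim\Hom_N(\pi,\psi)\le1$ and bypass this claim. That route suffices, since it bounds the total $(N,\psi)$-Whittaker dimension of the restriction by one.
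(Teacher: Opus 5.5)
Your fallback argument is exactly the paper's proof and is complete by itself: uniqueness of Whittaker models for $\G(F)$ applied to $\Hom_N(\pi,\psi)=\bigoplus_i\Hom_N(\tau_i,\psi)$ forces a single generic constituent of multiplicity one, and for the converse you compose a Whittaker functional with a projection. Discard the detour through conjugates, though, because the ``fact'' that each constituent is generic for at most one $T$-orbit of generic characters is false (the Steinberg representation of $\mathrm{Sp}_{2n}(F)$, or any irreducible principal series, is generic for every generic character), so your derivation of ``exactly one conjugate is generic'' fails as written, and the coset-versus-orbit claim you call the main obstacle (true, cf.\ Lemma~\ref{num-of-orbits}) plays no role in this lemma.
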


\begin{prf}
	Let $\pi|_{\mathrm{Sp}_{2n}(F)}=\bigoplus_{i=1}^r \tau_i$ be a decomposition into a direct sum of irreducible subrepresentations.  If $\pi$ is generic, we have 
\[
	\C\cong \Hom_N(\pi,\psi)\cong\bigoplus_{i=1}^r \Hom_N(\tau_i,\psi).  
\]
By the uniqueness of Whittaker model, there exists a unique $i$ such that $\tau_i$ is generic with respect to $(N,\psi)$.  

Conversely, we suppose that $\tau_1$ is generic.  Then, the composite of a non-zero $N$-homomorphism $\tau_1\rightarrow \psi$ and the projection $\pi\rightarrow \tau_1$ is a non-zero $N$-homomorphism $\pi\rightarrow \psi$.  Hence, $\pi$ is generic with respect to $(N,\psi)$.  
\end{prf}

We note that $N$ is invariant under conjugation by $\widetilde{T}$.  The same argument as the following lemma can be seen in \cite{MR3681684}.  

\begin{lem}\label{twist-generic}
	Let $\kappa\in\widetilde{T}$, $\psi$ be a generic character of $N$, and $\tau$ be an irreducible representation of $\mathrm{Sp}_{2n}(F)$.  Then, in order for $\tau$ to be generic with respect to $(N,\psi)$, it is necessary and sufficient that $\tau^\kappa$ is generic with respect to $(N,\psi^\kappa)$.  
\end{lem}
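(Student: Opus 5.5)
The plan is to compare Whittaker functionals for $\tau$ and for $\tau^\kappa$ directly. The underlying spaces of $\tau$ and $\tau^\kappa$ coincide; only the action changes, via $\tau^\kappa(x)=\tau(\kappa x\kappa^{-1})$. So a linear functional $\lambda$ on that common space can be tested against both actions at once.

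First I fix conventions. Since $\kappa\in\widetilde{T}$ normalizes both $N$ and $\mathrm{Sp}_{2n}(F)$ (the latter being normal in $\G(F)$), $\tau^\kappa$ is again an irreducible representation of $\mathrm{Sp}_{2n}(F)$. I put $\psi^\kappa(n)=\psi(\kappa n\kappa^{-1})$ for $n\in N$. Conjugation by $\kappa$ is an automorphism of $N$ which preserves the simple root subgroups, scaling each by a root character value. Hence $\psi^\kappa$ is still trivial on the derived group and nontrivial on each simple root subgroup, so $\psi^\kappa$ is again generic.

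Next I identify the functionals. Suppose $\lambda\in\Hom_N(\tau,\psi)$, so that $\lambda(\tau(n)v)=\psi(n)\lambda(v)$ for all $n\in N$. Then for every $n\in N$ we get
\[
\lambda(\tau^\kappa(n)v)=\lambda(\tau(\kappa n\kappa^{-1})v)=\psi(\kappa n\kappa^{-1})\lambda(v)=\psi^\kappa(n)\lambda(v),
\]
because $\kappa n\kappa^{-1}\in N$. Thus $\lambda\in\Hom_N(\tau^\kappa,\psi^\kappa)$. The converse follows by reading the same identity backwards: if $\lambda\in\Hom_N(\tau^\kappa,\psi^\kappa)$, evaluate at $n'=\kappa^{-1}n\kappa$ to recover the $(N,\psi)$-equivariance for $\tau$. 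Since $\lambda$ is the same functional in both directions, the two Hom spaces are literally equal as subspaces of the dual. In particular one is nonzero exactly when the other is, which gives both the necessity and the sufficiency asserted in Lemma~\ref{twist-generic}.

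There is no real obstacle in this argument. The only point needing a moment's care is the genericity of $\psi^\kappa$, which follows from the $\widetilde{T}$-stability of the simple root subgroups noted above. Otherwise the proof amounts to unwinding the definitions.
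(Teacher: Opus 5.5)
Your proposal is correct and is essentially the paper's argument. The paper notes that, because $n\mapsto\kappa n\kappa^{-1}$ is a bijection of $N$, the twisted coinvariant spaces $V_\psi$ and $(V^\kappa)_{\psi^\kappa}$ are the same quotient of the common underlying space, and then dualizes via $\mathrm{Hom}_{\mathbb{C}}(V_\psi,\mathbb{C})\cong\mathrm{Hom}_N(V,\psi)$; you verify the equality of the two $\mathrm{Hom}$-spaces directly inside the algebraic dual, which is the dual form of the same observation (your extra check that $\psi^\kappa$ is again generic is a harmless addition).
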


\begin{prf}
	We write $V$ (resp. $V^\tau$) for the representation spaces of $\tau$ (resp. $\tau^\kappa$).  Since $N\rightarrow N;\:n\mapsto \kappa n\kappa^{-1}$ is bijective, the following equation of coinvariant spaces holds: 
\[
	V_\psi=(V^\kappa)_{\psi^\kappa}.  
\]
Here, for example, $V_\psi=V/\langle \pi(u)v-\psi(u)v\mid v\in V,\;u\in N\rangle$.  The lemma follows from the isomorphism $\Hom_\C(V_\psi,\C)\cong \Hom_N(V,\psi)$ and the same isomrphism for $V^\kappa$ and $\psi^\kappa$.  
\end{prf}

We put $T=\widetilde{T}\cap \mathrm{Sp}_{2n}(F)$.  Since $N$ is normal in $\widetilde{B}$ 
, we need to consider only $T$-adjoint (resp. $\widetilde{T}$-adjoint) orbits of a fixed generic character of $N$ for generic representations of $\mathrm{Sp}_{2n}(F)$ (resp. $\mathrm{GSp}_{2n}(F)$).  

\begin{lem}\label{num-of-orbits}
	For any generic character $\psi$ of $N$, we can take a complete system $\{\kappa_i\}$ of representatives of $\G(F)/Z_{\G}(F)\mathrm{Sp}_{2n}(F)$ such that $\{\psi^{\kappa_i}\}$ is a complete system of the $T$-adjoint orbits of generic characters on $N$.  
\end{lem}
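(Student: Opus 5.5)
Both groups of orbits will be parametrized by square classes and the two parametrizations compared. First I will describe generic characters of $N$ explicitly. A generic character factors through $N/[N,N]\cong\prod_{\alpha\in\Delta}U_\alpha$, the product of the simple root subgroups. So it has the form $\psi(u)=\psi_0\bigl(\sum_{\alpha\in\Delta}a_\alpha u_\alpha\bigr)$ with all $a_\alpha\in F^\times$, where $\psi_0$ is a fixed nontrivial additive character of $F$. An element $t\in\widetilde{T}$ acts by $a_\alpha\mapsto\alpha(t)a_\alpha$. Hence $T$-orbits (resp.\ $\widetilde{T}$-orbits) of generic characters are the orbits of $(F^\times)^{\Delta}$ under the image of $T$ (resp.\ $\widetilde{T}$) via $t\mapsto(\alpha(t))_{\alpha\in\Delta}$.

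Next I will compute these images in standard coordinates. Write $t=\mathrm{diag}(t_1,\dots,t_n,\lambda t_n^{-1},\dots,\lambda t_1^{-1})$. The short simple roots are $t_i/t_{i+1}$ for $1\le i<n$, and the long simple root is $t_n^2/\lambda$. Since the short roots can be prescribed freely, the $\widetilde{T}$-image is all of $(F^\times)^n$, so there is a single $\widetilde{T}$-orbit. For $T$ we have $\lambda=1$, and the $T$-orbits are classified by the class of $a_{\alpha_n}\prod_{i}a_{\alpha_i}^{2}$ modulo $(F^\times)^2$ (equivalently by the class of the long coordinate after normalizing the short ones to $1$). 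Thus the $T$-orbits form a torsor under $F^\times/(F^\times)^2$.

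Then I compare with the quotient $\G(F)/Z_{\G}(F)\mathrm{Sp}_{2n}(F)$. Via the similitude $\mu$, it is isomorphic to $F^\times/(F^\times)^2$, because $\mu(Z)=(F^\times)^2$ and $\ker\mu=\mathrm{Sp}_{2n}(F)$. For $c\in F^\times$ I take $\kappa_c=\mathrm{diag}(1,\dots,1,c,\dots,c)\in\widetilde{T}$, which has $\mu(\kappa_c)=c$. This element acts trivially on the short simple root coordinates and multiplies $a_{\alpha_n}$ by $c^{-1}$ (or by $c$, depending on convention). So, as $c$ runs over representatives of $F^\times/(F^\times)^2$, the characters $\psi^{\kappa_c}$ meet each $T$-orbit exactly once, and the $\kappa_c$ form a complete system of representatives of $\G(F)/Z_{\G}(F)\mathrm{Sp}_{2n}(F)$.

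The main care point is the bookkeeping. I must check that the orbit invariant is really the square class of one coordinate, so that the $T$-image $\{(t_i/t_{i+1},t_n^2)\}$ has index exactly $|F^\times/(F^\times)^2|$ with quotient detected by the long coordinate. I must also confirm that distinct square classes of $c$ give distinct $T$-orbits, which follows since $c$ shifts the invariant by $c$. The hypothesis $p>2$ is not needed here, beyond the finiteness of $F^\times/(F^\times)^2$.
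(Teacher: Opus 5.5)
Your proof is correct and follows the paper's route. You write generic characters in simple-root coordinates and compute that the image of $T$ is $(F^\times)^{n-1}\times(F^\times)^2$, so $T$-orbits are classified by the square class of the long-root coordinate; your invariant $a_{\alpha_n}\prod_i a_{\alpha_i}^2$ is the same thing, since the extra factor is a square. You then identify $\G(F)/Z_{\G}(F)\mathrm{Sp}_{2n}(F)$ with $F^\times/(F^\times)^2$ via the similitude, as the paper does.

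The one difference is in the last step. The paper stops at an equality of cardinalities, while your explicit representatives $\kappa_c=\mathrm{diag}(1,\dots,1,c,\dots,c)$ actually exhibit the required bijection. You also correctly use $\mu$ where the paper writes ``determinant'', which would fail for even $n$.
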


\begin{prf}
	We take $\widetilde{B}$ as the subgroup of the upper triangular matrices.  Then, a generic character of $N$ is of the form
\[
	\psi_{F;\;a_1,\ldots,a_n}\colon N\rightarrow \C;\;(u_{ij})\mapsto \psi_F(a_1u_{1\,2}+\cdots+a_{n}u_{n\,n+1})
\]
where $\psi_F\colon F\rightarrow \C$ is a nontrivial additive character and $a_i\in F^\times$.  For $t=\mathrm{diag}(t_1,\ldots,t_n,t_n^{-1},\ldots,t_1^{-1})\in T$, we have 
\[
	\psi_{F;\;a_1,\ldots,a_n}^t=\psi_{F;\;t_1^{-1}t_2a_1,\ldots,t_{n-1}^{-1}t_na_{n-1},t_n^{-2}a_n}.  
\]
Hence, the number of orbits is equal to $\#(F^\times/(F^\times)^2)$.  Since the determinat map induces an isomorphism $\mathrm{GSp}_{2n}(F)/Z_{\G}(F)\mathrm{Sp}_{2n}(F)\cong F^\times/(F^\times)^2$, the number is also equal to $\#(\mathrm{GSp}_{2n}(F)/Z_{\G}(F)\mathrm{Sp}_{2n}(F))$.  This implies the lemma.  
\end{prf}

\begin{prop}
	For each discrete $L$-packet $\widetilde{\Pi}$ of $\G(F)$, there exists a unique generic representation $\pi_0$ with respect to $(N.\psi)$.  Moreover, the restriction $\pi_0|_{\mathrm{Sp}_{2n}(F)}$ is a direct sum of generic representations of $\mathrm{Sp}_{2n}(F)$ for some generic character of $N$: 
\[
	\pi_0|_{\mathrm{Sp}_{2n}(F)}=\tau_1\oplus\cdots\oplus\tau_r
\]
and for each generic character $\psi^\prime$ of $N$, there uniquely exists $i$ such that $\tau_i$ is $\psi^\prime$-generic.  
\end{prop}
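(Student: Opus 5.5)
We will use the relation $\widetilde{\Pi}=\Pi_{\tilde{\phi}}$, recalled in Section~\ref{Xus-local}, together with the generic-packet conjecture for $\mathrm{Sp}_{2n}(F)$ established by Arthur. That result says: for a discrete parameter $\phi$ and each generic character $\psi'$ of $N$, the packet $\Pi_\phi$ contains exactly one $\psi'$-generic member. Write $\widetilde{\Pi}=\Pi_{\tilde{\phi}}\otimes\omega$ and fix $\psi$.

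\emph{Existence.} Let $\tau$ be the $\psi$-generic member of $\Pi_\phi$. Since every element of $\widetilde{\Pi}_{\phi,\tilde\chi}$ restricts into $\Pi_\phi$, and every member of $\Pi_\phi$ occurs in the restriction of some element of $\widetilde{\Pi}_{\phi,\tilde\chi}$ (induce $\widetilde\chi\cdot\tau$ and take an irreducible quotient), there is $\pi\in\widetilde{\Pi}_{\phi,\tilde\chi}$ whose restriction contains $\tau$. By Lemma~\ref{existence-of-generic-reps}, $\pi$ is $\psi$-generic. By property (i) of Section~\ref{Xus-local}, $\pi\in\Pi_{\tilde{\phi}}\otimes\omega'$ for some $\omega'\in X$. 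Twisting by a character factoring through the similitude preserves genericity, since $N\subset\mathrm{Sp}_{2n}(F)\subset\ker$. Hence $\pi\otimes\omega'^{-1}\omega\in\widetilde{\Pi}$ is $\psi$-generic.

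\emph{Restriction.} Write $\pi_0|_{\mathrm{Sp}_{2n}(F)}=\bigoplus\tau_i$. By Mackey theory (as in Section 3) and Xu's multiplicity-one restriction \cite[Corollary 6.4]{MR3568940}, the $\tau_i$ are the distinct conjugates $\tau^{\kappa}$ with $\kappa$ running over $\G(F)/Z_{\G}(F)\mathrm{Sp}_{2n}(F)$ modulo the stabilizer of $\tau$. Take the representatives $\kappa_j$ of Lemma~\ref{num-of-orbits}. By Lemma~\ref{twist-generic}, $\tau^{\kappa_j}$ is $\psi^{\kappa_j}$-generic, and these characters exhaust the $T$-orbits of generic characters. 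Since $\psi$-genericity of a representation of $\mathrm{Sp}_{2n}(F)$ depends only on the $T$-orbit of $\psi$, every $\psi'$ is matched by some $\tau_i$, so each $\tau_i$ is generic for some character. For uniqueness of $i$: each $\tau_i$ lies in $\Pi_\phi$, and the $\tau_i$ are pairwise non-isomorphic. The $\psi'$-generic member of $\Pi_\phi$ is unique, so at most one $\tau_i$ is $\psi'$-generic. (Lemma~\ref{existence-of-generic-reps} also gives this directly, via multiplicity one of Whittaker functionals on $\pi_0$ after conjugating $\psi'$ to $\psi$ by $\widetilde T$.)

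\emph{Uniqueness of $\pi_0$.} Suppose $\pi_0,\pi_0'\in\widetilde{\Pi}$ are both $\psi$-generic. By Lemma~\ref{existence-of-generic-reps}, both restrictions contain the unique $\psi$-generic member $\tau$ of $\Pi_\phi$. Then \cite[Corollary 6.4]{MR3568940} gives $\pi_0'\cong\pi_0\otimes\omega''$, and property (i) forces $\pi_0'\cong\pi_0$, exactly as in the proof of Proposition~\ref{inj}.

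The main obstacle is making sure the external input applies: Arthur's result that each $\Pi_\phi$ contains exactly one $\psi'$-generic member for every Whittaker datum. We also need the restriction to be multiplicity-free with constituents forming a single $\G(F)$-orbit. The rest is bookkeeping of orbits.
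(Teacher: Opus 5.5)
Your proposal is correct and follows essentially the same route as the paper. Mackey/Clifford theory shows that the restriction of a $\psi$-generic $\pi_0$ consists of $\G(F)$-conjugates of one $\psi$-generic constituent, Lemmas~\ref{twist-generic} and~\ref{num-of-orbits} match these conjugates with the $T$-orbits of generic characters, and uniqueness of $\pi_0$ comes from uniqueness of the generic member of a discrete packet of $\mathrm{Sp}_{2n}(F)$ together with \cite[Corollary 6.4]{MR3568940} and property (i), as in Proposition~\ref{inj}.

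The main difference is that you spell out steps the paper leaves implicit, above all existence: the paper's proof simply begins with ``Suppose $\pi$ is generic'', whereas you obtain a $\psi$-generic member from Arthur's generic $\tau\in\Pi_\phi$ by inducing $\widetilde\chi\cdot\tau$ and twisting back into $\widetilde\Pi$. That induction step tacitly uses that all members of $\Pi_\phi$ share the central character $\widetilde\chi|_{\{\pm1\}}$, which is standard but worth stating.
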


\begin{prf}
	We write $\widetilde{\chi}$ for the central character of $\pi$.  Suppose $\pi$ is generic with respect to a generic character $\psi$.  For an irreducible $\psi$-generic subrepresentation $\tau\subset\pi|_{\mathrm{Sp}_{2n}(F)}$ (cf. Lemma~\ref{existence-of-generic-reps}), as the argument in defining our map $\mathrm{Res}$ before Proposition~\ref{inj}, we can see that $\pi|_{\mathrm{Sp}_{2n}(F)}$ is a subrepresentation of $\bigoplus_{g\in \G(F)/Z_{\G}(F)G(F)}\widetilde{\chi}\cdot\tau^g$.  Hence, the proposition follows from Lemma~\ref{twist-generic} and Lemma~\ref{num-of-orbits}.  Uniqueness is due to that of generic representations in a discrete $L$-packet of $\mathrm{Sp}_{2n}(F)$.  
\end{prf}

\subsection{Atomic stability of packets on the inner form side}

We first remind the definition of an elliptic inner product on the space of elliptic (stable) orbital integrals.  Let $\mathbb{M}$ be a reductive group over $F$ and $\mathcal{I}_e(M)$ be the space of orbital integrals on elliptic regular elements of $M=\mathbb{M}(F)$.  For $\Phi, \Psi\in \mathcal{I}_e(M)$, we put
\[
	\langle \Phi,\Psi \rangle_{\mathbb{M},e}=\sum_{T}\frac{1}{|W_F(M,T)|}\int_{{Z_M}\backslash T}D_M(t)^2\Phi(t)\overline{\Psi(t)}dt, 
\]
where $T$ runs over the conjugacy classes of elliptic maximal tori of $M$, $W$ denotes the Weyl groups, and $Z$ denotes the center.  

We also need another version of a pairing for stable ones.  Let $\mathcal{SI}_e(M)$ be the space of stable orbital integrals on elliptic regular elements of $M=\mathbb{M}(F)$.  For $\Phi, \Psi\in \mathcal{SI}_e(M)$, we put
\[
	\langle \Phi,\Psi \rangle^\mathrm{st}_{\mathbb{M},e}=\sum_{\mathbb{T}}\frac{1}{|W(\mathbb{M},\mathbb{T})(F)|}\int_{Z_\mathbb{M}(F)\backslash\mathbb{T}(F)}\frac{1}{n(t)}D_M(t)^2\Phi(t)\overline{\Psi(t)}dt, 
\]
where $\mathbb{T}$ runs over the stable conjugacy classes of elliptic maximal tori of $\mathbb{M}$ and $n(t)$ is the number of conjugacy classes in the stable conjugacy class of $t$.  

For $\Phi\in\mathcal{I}_e(M)$, we write $\displaystyle S\Phi(t)=\sum_{t^\prime}\Phi(t^\prime)$ where $t^\prime$ runs through the conjugacy classes in the stable conjugacy class of $t$.  By the definition, we can see $\langle S\Phi,S\Psi \rangle^\mathrm{st}_{\mathbb{M},e}=\langle \Phi,\Psi \rangle_{\mathbb{M},e}$ for stable functions $\Phi, \Psi\in \mathcal{I}_e(M)$.  

\begin{lem}\label{no-crossing}
	For any distinct discrete $L$-packets $\Pi_1$ and $\Pi_2$ of $\G(F)$, the corresponding packets of $H(F)$ by Theorem~\ref{main} have no isomorphic representations: 
	\[
		\Pi_1^H\cap \Pi_2^H=\emptyset.  
	\]
\end{lem}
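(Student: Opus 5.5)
We argue by orthogonality of characters, using the elliptic inner products just introduced. Write $\Theta_{\Pi_i}=\sum_{\pi\in\Pi_i}\Theta_\pi$ for $i=1,2$, and let $\Theta_{\Pi_i^H}=\sum_{\tau\in\Pi_i^H}m(\tau)\Theta_\tau$ be the corresponding positive combinations given by Theorem~\ref{main}. Suppose, for contradiction, that some $\tau_0$ lies in $\Pi_1^H\cap\Pi_2^H$.

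The first step is to compute $\langle\Theta_{\Pi_1^H},\Theta_{\Pi_2^H}\rangle_{H,e}$ on the inner form side. All members of $\Pi_i^H$ are discrete series representations. We may assume both packets have the same central character; otherwise we reduce to that case, since twisting preserves the matching. For discrete series representations with a common central character, the elliptic inner product of characters is given by the Schur orthogonality relations (Kazhdan, Clozel): $\langle\Theta_\tau,\Theta_{\tau'}\rangle_{H,e}=\delta_{\tau\tau'}$. Since every coefficient $m(\tau)$ is a positive integer, this gives
\[
\langle\Theta_{\Pi_1^H},\Theta_{\Pi_2^H}\rangle_{H,e}=\sum_{\tau\in\Pi_1^H\cap\Pi_2^H}m_1(\tau)m_2(\tau)\geq m_1(\tau_0)m_2(\tau_0)>0 .
\]

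The second step transports this pairing to $\G(F)$. By Theorem~\ref{main}, both $\Theta_{\Pi_i^H}$ are stable and equal $e\,\Theta_{\Pi_i}$ at matching elements. Every elliptic stable torus of $\G$ transfers to $H$, and $H$ has no other elliptic tori. Weyl discriminants agree and measures can be chosen compatibly, as in the proof of Proposition~\ref{stability-of-our-packet}. Using $\langle S\Phi,S\Psi\rangle^{\mathrm{st}}_e=\langle\Phi,\Psi\rangle_e$, we obtain
\[
\langle\Theta_{\Pi_1^H},\Theta_{\Pi_2^H}\rangle_{H,e}=\langle S\Theta_{\Pi_1^H},S\Theta_{\Pi_2^H}\rangle^{\mathrm{st}}_{H,e}=e^2\langle S\Theta_{\Pi_1},S\Theta_{\Pi_2}\rangle^{\mathrm{st}}_{\G,e}=\langle\Theta_{\Pi_1},\Theta_{\Pi_2}\rangle_{\G,e}.
\]
Two points need checking here. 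One must verify that $n(t)$ and the normalisations $1/|W(\mathbb{T})(F)|$ match on the two sides; the $H^1$-vanishing argument of Lemma~\ref{matching-orbital} shows the number of conjugacy classes in a stable class agrees for $\G$ and $H$. One must also verify that the constant $e^2=1$ does not affect the conclusion.

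The last step evaluates the right-hand side. Distinct Xu packets $\Pi_1,\Pi_2$ are disjoint by construction, as $X$-translates or fibres over different $\phi$. Orthogonality of discrete series characters on $\G(F)$ then gives $\langle\Theta_{\Pi_1},\Theta_{\Pi_2}\rangle_{\G,e}=0$. This contradicts the positivity obtained in the first step, so $\Pi_1^H\cap\Pi_2^H=\emptyset$.

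The main obstacle is the comparison in the second step: the exact matching of elliptic measures, Weyl groups and $n(t)$ between the stable pairings on $\G$ and $H$. In particular one needs that every elliptic torus of $H$ transfers, and that the elliptic pairing can be used with a fixed central character (working modulo $Z$). I would first settle this torus by torus, using the inner twist $\psi$ from the proof of Proposition~\ref{stability-of-our-packet}.
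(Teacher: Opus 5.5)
Your proposal is correct and is essentially the paper's proof. You identify the stable elliptic pairings of the packet sums on $\G(F)$ and $H(F)$ via the character relation, pass to the ordinary elliptic pairings because the sums are stable, and play the vanishing on $\G(F)$ from orthogonality against the lower bound $m_1(\tau_0)m_2(\tau_0)>0$ on $H(F)$.

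The only slip is in the central-character reduction. If the central characters of $\Pi_1$ and $\Pi_2$ differ, then $\Pi_1^H$ and $\Pi_2^H$ are disjoint outright, because their members inherit those central characters. So no twisting is involved in that case; twisting by a character of the similitude is needed only to make the common central character unitary, which is what the paper's ``we may assume unitary'' does.
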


\begin{prf}
	We may assume that all members in $\Pi_1$ and $\Pi_2$ (hence also in $\Pi_1^H$ and $\Pi_2^H$) are unitary.  We write the stable sum of characters in a packets as
	\[
		\Theta_{\Pi_i}=\sum_{\pi\in\Pi_i}\Theta_\pi,\quad \Theta_{\Pi_i^H}=\sum_{\pi^H\in\Pi_i^H}m(\pi^H)\Theta_{\pi^H}.
	\]
	Thanks to the canonical identification of stable conjugacy classes of elliptic maximal tori of $\G$ and $H$, we have the following equation from the character relation: 
	\[
		\langle \Theta_{\Pi_1}, \Theta_{\Pi_2}\rangle^{\mathrm{st}}_{\G,e}=\langle \Theta_{\Pi_1^H}, \Theta_{\Pi_2^H}\rangle^{\mathrm{st}}_{H,e}.
	\]
	Since each sum of characters is stable and the number of conjugacy classes in a stable class is invariant under inner twists, we have 
	\[
		\langle \Theta_{\Pi_1}, \Theta_{\Pi_2}\rangle_{\G,e}=\langle \Theta_{\Pi_1^H}, \Theta_{\Pi_2^H}\rangle_{H,e}.
	\]
	Now $\Pi_1$ and $\Pi_2$ have no common representations.  Then, by the orthogonality of characters, the LHS is zero.    If there was a common representation $\pi^H\in\Pi_1^H\cap\Pi_2^H$, then the RHS would be equal to or greater than $m(\pi^H)^2>0$.  This is a contradiction.  
\end{prf}

\begin{prop}
	Let $\Pi$ be a discrete $L$-packet of $\G(F)$ and $\Pi^H$ be the corresponding packet of $H(F)$ by Theorem~\ref{main}.  Then, for each proper subset $\Pi_0^H\subsetneq \Pi^H$, any nonzero linear combination of characters of members in $\Pi_0^H$ is not stable.  
\end{prop}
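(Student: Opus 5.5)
The plan is a contradiction argument built on the elliptic inner products and on Lemma~\ref{no-crossing}. Suppose $\Theta=\sum_{\tau\in\Pi_0^H}c_\tau\Theta_\tau$ is a nonzero stable linear combination for some proper subset $\Pi_0^H\subsetneq\Pi^H$. As in the proof of Lemma~\ref{no-crossing}, twist so that every representation involved is unitary. By the orthogonality relations for elliptic characters of discrete series representations of $H(F)$, the restrictions $\Theta_\tau|_{H(F)^{\mathrm{ell}}}$ are orthonormal for $\langle\cdot,\cdot\rangle_{H,e}$. Hence $\Theta$ restricts to a nonzero function on the elliptic regular set, and its coefficients are recovered as $c_\tau=\langle\Theta,\Theta_\tau\rangle_{H,e}$.

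\textbf{Step 1: transfer $\Theta$ to $\G(F)$.} Since $\Theta$ is stable, it is a function on stable elliptic conjugacy classes of $H$. Via the canonical identification of stable classes of elliptic maximal tori of $\G$ and $H$ already used in Lemma~\ref{no-crossing}, define a stable function $\Theta^G$ on the elliptic regular set of $\G(F)$ by $\Theta^G(g)=e\,\Theta(h)$ for matching $g,h$. The invariance of $n(t)$ under inner twists then gives
\[
\langle \Theta^G,\Psi\rangle^{\mathrm{st}}_{\G,e}=e\,\langle\Theta,\Psi^H\rangle^{\mathrm{st}}_{H,e}
\]
whenever $\Psi,\Psi^H$ are matching stable functions.

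\textbf{Step 2: expand $\Theta^G$ in packet sums.} I will argue that $\Theta^G$ lies in the closed span, for the elliptic pairing, of the stable elliptic characters of $\G(F)$. By Kazhdan's density theorem, the elliptic tempered characters span the elliptic class functions modulo the relevant kernel. Using Proposition~\ref{surj} together with Xu's atomic stability, the stable part of this span is spanned by the packet sums $\Theta_{\Pi'}$. Here $\Pi'$ ranges over discrete packets, and possibly also over elliptic tempered non-discrete packets. For the latter I would check that their elliptic stable characters transfer to $0$ on $H$, or at least pair to $0$ with discrete series characters of $H$. This should follow from the same Casselman/central exponent argument quoted at the end of Proposition~\ref{stability-of-our-packet}, or from the vanishing of their pairing with pseudo-coefficients. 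Thus
\[
\Theta^G=\sum_{\Pi'}a_{\Pi'}\Theta_{\Pi'}
\]
on the elliptic set. Transporting back with Theorem~\ref{main} yields $\Theta=\sum_{\Pi'}a_{\Pi'}\Theta_{\Pi'^H}$ on $H(F)^{\mathrm{ell}}$.

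\textbf{Step 3: compare coefficients.} By Lemma~\ref{no-crossing}, the packets $\Pi'^H$ are pairwise disjoint. Pairing both sides with $\Theta_\tau$ for each $\tau$ and using orthonormality, we get $c_\tau=a_{\Pi'}m(\tau)$ for the unique $\Pi'$ with $\tau\in\Pi'^H$, and $c_\tau=0$ otherwise. Since $\Theta\neq0$, some $a_{\Pi'}\neq0$. Because the $c_\tau$ are supported in $\Pi_0^H\subset\Pi^H$, only $\Pi'=\Pi$ can have $a_{\Pi'}\neq 0$. But then $c_\tau=a_\Pi m(\tau)\neq0$ for every $\tau\in\Pi^H$, since all $m(\tau)$ are positive. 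This contradicts $\Pi_0^H\subsetneq\Pi^H$.

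\textbf{Main obstacle.} I expect the main difficulty to be Step~2: showing that a stable elliptic function coming from $H$ lies in the span of the packet sums $\Theta_{\Pi'}$ of $\G(F)$, and handling the contribution of non-discrete elliptic tempered packets. The first approach I would try is to reduce to pairings. It suffices that $\Theta^G$ be orthogonal to everything orthogonal to all $\Theta_{\Pi'}$. For this I would use the fact that stable elliptic orbital integrals of cuspidal functions on $\G$ are detected by stable characters, together with the stable Weyl integration formula as in the proof of Proposition~\ref{surj}.
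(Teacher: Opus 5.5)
\textbf{Gap in Step~2.} Your Step~3 matches the end of the paper's proof. Once the stable combination is written as $\sum_i d_i\,\mathrm{Tr}_{\Sigma_i^H}$, Lemma~\ref{no-crossing}, linear independence of characters and positivity of the $m(\tau)$ force $\Pi_0^H=\Pi^H$. The gap is in Step~2, which carries all the content, and the justification you give for it does not work. Atomic stability of each packet says only that no proper subset of a packet supports a stable combination. Even combined with Proposition~\ref{surj}, this does not imply that a stable combination involving members of several packets splits packet by packet, because minimal supports of stable combinations need not form a partition. For instance, suppose the stable span of $\Theta_a,\Theta_b,\Theta_c,\Theta_d$ were spanned by $\Theta_a+\Theta_b$, $\Theta_c+\Theta_d$ and $\Theta_a+\Theta_c$. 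Then both $\{a,b\}$ and $\{c,d\}$ would be atomically stable, yet $\Theta_a+\Theta_c$ would be stable and not a combination of the two block sums. So the claim that ``the stable part of this span is spanned by the packet sums'' does not follow from the inputs you list. Your fallback, that stable orbital integrals of cuspidal functions are detected by stable characters, is the dual form of the same claim and needs the same input.

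\textbf{What is needed.} You need the stronger statement that the packet sums span, indeed form a basis of, the space of stable elliptic distributions. This rests on the endoscopic character identities, i.e.\ the decomposition of the elliptic space into stable and endoscopic parts, not on minimality of packets. The paper takes exactly this as input. By \cite[Corollary 3.2.7]{some-comment} and \cite[Proposition 7.2.2]{some-comment}, $SD_{\mathrm{ell}}(H(F))$ has basis $\{\mathrm{Tr}_{\Sigma_0^H}\}$, indexed by the discrete $L$-packets $\Sigma_0$ of $\G(F)$. The paper then runs your Step~3 directly on $H(F)$, so your transfer to $\G$ and the appeal to Kazhdan density are unnecessary. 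Your concern about non-discrete elliptic tempered packets is also easier than you suggest. Their stable characters are parabolically induced from proper Levi subgroups, so they vanish identically on elliptic regular elements, and no Casselman-type argument is needed.
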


\begin{prf}
	Suppose $\Pi_0^H\subsetneq \Pi^H$ and $\displaystyle \sum_{\tau\in\Pi_0^H}c_\tau\Theta_\tau$ is stable for some $c_\tau\in\C$.  Then, by a similar argument th the proof of Proposition~\ref{surj}, 
\[
	\mathrm{Tr}_{\Pi_0^H}:=\sum_{\tau\in\Pi_0^H}c_\tau\mathrm{Tr}_\tau
\]
is a stable distribution.  On the other hand, the space of elliptic stable distributions $SD_{\mathrm{ell}}(H(F))$ (which can be regarded as a dual space of $\mathcal{SI}_e(H(F))$) has a basis 
\[
	\biggl\{\mathrm{Tr}_{\Sigma_0^H}:=\sum_{\tau\in\Sigma_0^H}m(\tau)\mathrm{Tr}_\tau\mid \Sigma_0:\;\text{discrete $L$-packets of $\G(F)$}\biggr\}
\]
from \cite[Corollary 3.2.7]{some-comment} and \cite[Proposition 7.2.2]{some-comment}.  Therefore, as an element of $SD_{\mathrm{ell}}(H(F))$, we can write uniquely in the form
\[
	\mathrm{Tr}_{\Pi_0^H}=\sum_{i=1}^rd_i\mathrm{Tr}_{\Sigma_i^H}
\]
where $\Sigma_i$ are distinct discrete $L$-packets of $\G(F)$ and $d_i\in\C^\times$.  Since the trace characters of irreducible discrete representations of $H(F)$ are linearly independent, we have 
\[
	\Pi_0^H=\bigsqcup_{i=1}^r \Sigma_i^H
\]
by Lemma~\ref{no-crossing}.  Hence, Lemma~\ref{no-crossing} shows $r=1$ and $\Pi_0^H=\Sigma_1^H=\Pi^H$.  This contradicts to $\Pi_0^H\neq \Pi^H$.  
\end{prf}

\section{Simple supercuspidal $L$-packets}

\begin{lem}\label{single}
	If the discrete packet $\Sigma_0$ of $\G(F)$ in Theorem~\ref{main} is singleton, the corresponding discrete packet $\Sigma_0^H$ of $H(F)$ is also singleton.  
\end{lem}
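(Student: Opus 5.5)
I would prove this using the elliptic inner products of Section~4.3, exactly as in the proof of Lemma~\ref{no-crossing}. Suppose $\Sigma_0=\{\pi\}$ with $\pi$ a discrete series representation. After twisting by an unramified character, which does not affect the statement, we may assume $\pi$ is unitary; then every member of $\Sigma_0^H$ is unitary as well.

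\textbf{Step 1: compute the inner product on both sides.} By Theorem~\ref{main},
\[
	\Theta_\pi(g)=e\sum_{\tau\in\Sigma_0^H}m(\tau)\Theta_\tau(h)
\]
for matching elliptic $g,h$, and both sides are stable functions. Stable conjugacy classes of elliptic maximal tori of $\G$ and $H$ correspond canonically. The numbers $n(t)$, the Weyl discriminants and the factors $|W(\mathbb{M},\mathbb{T})(F)|$ agree under this correspondence. Since $e^2=1$, this gives
\[
	\langle\Theta_\pi,\Theta_\pi\rangle_{\G,e}=\langle\Theta_{\Sigma_0^H},\Theta_{\Sigma_0^H}\rangle_{H,e},
\]
by the same passage between $\langle\cdot,\cdot\rangle^{\mathrm{st}}$ and $\langle\cdot,\cdot\rangle$ used in Lemma~\ref{no-crossing}.

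\textbf{Step 2: apply elliptic orthogonality.} By the orthogonality relations for characters of discrete series representations (Kazhdan, Clozel), the elliptic inner product of $\Theta_{\pi_1}$ and $\Theta_{\pi_2}$, for unitary discrete series with the same central character, equals $1$ if $\pi_1\cong\pi_2$ and $0$ otherwise. This assumes measures on $Z\backslash T$ are normalized as in the definition of $\langle\cdot,\cdot\rangle_{\mathbb{M},e}$. All members of $\Sigma_0^H$ share a central character, since the centers of $\G$ and $H$ coincide and the character relation determines it. Hence the left side equals $1$, while the right side equals
\[
	\sum_{\tau\in\Sigma_0^H}m(\tau)^2.
\]
Since each $m(\tau)$ is a positive integer, this sum equals $1$ only if $\Sigma_0^H$ has exactly one element and its multiplicity is $1$. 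Note that $\Sigma_0^H$ is nonempty, because $\Theta_\pi$ is not identically zero on the elliptic set.

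\textbf{Main obstacle.} The delicate point is making the orthogonality relation apply on both sides with the same normalization. Concretely, the central character of $\pi$ and of the members of $\Sigma_0^H$ must agree, and the integration over $Z\backslash T$ must use measures transported compatibly between the matching tori, as was arranged in the proof of Proposition~\ref{stability-of-our-packet}. I would handle this by checking that the orthogonality formula gives exactly $1$ (formal degree cancels against the Euler–Poincaré normalization) on each group separately, with measures chosen via the inner twist. If normalizations differ by a common constant, that constant is the same on both sides and so does not affect the conclusion that $\sum m(\tau)^2$ equals the single $\G$-side term.
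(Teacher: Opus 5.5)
Your proof is correct, but it follows a different route from the paper's.

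The paper argues by contradiction using test functions. Assuming two distinct $\sigma_1,\sigma_2\in\Sigma_0^H$, it takes $f_0$ to be the sum of their pseudo-coefficients, together with a transfer $f_0^G$. The trace identity gives $\bigl(\mathrm{Tr}\,\pi_G(f_0^G)\bigr)^2=\bigl(m(\sigma_1)+m(\sigma_2)\bigr)^2\geq 4$. The paper then bounds the same quantity above by $2$, using:
\begin{itemize}
\item the Weyl integration formula;
\item Cauchy--Schwarz for the elliptic pairing on $\G(F)$;
\item passage to stable orbital integrals;
\item the norm comparison (iv) of \cite[Theorem 4.3]{MR3267112}.
\end{itemize}

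You work directly with characters instead. Since $\Theta_\pi$ and $\sum_\tau m(\tau)\Theta_\tau$ are stable and agree up to the sign $e$ on matching elements, their elliptic norms coincide. This is exactly the mechanism of Lemma~\ref{no-crossing}, now applied with $\Pi_1=\Pi_2$. Orthogonality on each group then turns this into $1=\sum_\tau m(\tau)^2$.

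Your route gains brevity and a sharper conclusion. It needs no pseudo-coefficients, no choice of a transfer $f_0^G$, and no comparison of norms of orbital integrals of matching functions. It also shows that the unique multiplicity is $m(\tau)=1$.

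The inputs you rely on are the same ones the paper already uses in Lemma~\ref{no-crossing}:
\begin{itemize}
\item the correspondence of elliptic stable classes;
\item equality of $n(t)$, $D^2$ and $|W(\mathbb{M},\mathbb{T})(F)|$;
\item measures on $Z\backslash T$ transported compatibly through the inner twist.
\end{itemize}
With transported measures, the orthogonality normalization is the same on both groups.

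Your reduction to the unitary case is also sound: twist both sides by $|\mu|^{s}$, which preserves the relation because $\mu(g)=\mu(h)$ for matching elements. So is your claim that all $\tau\in\Sigma_0^H$ share the central character of $\pi$, which follows by linear independence of characters or from the global origin of $\Sigma^H$.
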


\begin{prf}
	Since the central characters of all members in $\Sigma_0$ and $\Sigma_0^H$ are the same, we can assume that all members in $\Sigma_0$ and $\Sigma_0^H$ are unitary.  Assume that $\Sigma_0^H$ contains distinct two discrete series representations $\sigma_1$ and $\sigma_2$.  If $f_0$ is the sum of pseudo-coefficients of $\sigma_1$ and $\sigma_2$, by the equation (\ref{eq-of-trace}), we have the inequality 
\[
	\bigl(\Tr\pi_G(f_0^G)\bigr)^2\geq \bigl(m(\sigma_1)+m(\sigma_2)\bigr)^2\geq 4
\]
for a transfer $f_0^G$ of $f_0$ to $\G(F)$.  

On the other hand, we have the following inequalities for the elliptic pairing: 
\begin{align*}
	\bigl(\Tr\pi_G(f_0^G)\bigr)^2&=\bigl\langle\Theta_{\pi_G},O(f_0^G)\bigr\rangle_{\G,e}^2  \\
		&\leq \bigl\langle\Theta_{\pi_G},\Theta_{\pi_G}\bigr\rangle_{\G,e}\bigl\langle O(f_0^G),O(f_0^G)\bigr\rangle_{\G,e} \\
		&=\bigl\langle O(f_0^G),O(f_0^G)\bigr\rangle_{\G,e} \\
		&=\bigl\langle SO(f_0^G), SO(f_0^G)\bigr\rangle_{\G,e}^{\mathrm{st}} \\
		&\leq\bigl\langle O(f_0), O(f_0)\bigr\rangle_{H,e}=2.
\end{align*}
Here we use the Weyl integration formula and the fact that pseudo-coefficients of discrete series representations are supported on elliptic elements in the first equality, and the second inequality is due to the Caucy-Schwartz inequality.  The third equality follows from the fact that the characterso of unitary representations form a normal orthogonal basis with respect to the elliptic pairing.  Moreover, our remark mentioned just before Lemma~\ref{no-crossing} shows the fourth equality, and we use (iv) of \cite[Theorem 4.3]{MR3267112} to get the fifth inequality.  

Hence, we have $4\leq \bigl(\Tr\pi_G(f_0)\bigr)^2\leq2$ and this is a contradiction.  
\end{prf}

\begin{prop}
	Suppose $\pi_G$ in Theorem~\ref{main} is a simple supercuspidal representation of $\G(F)$.  Then, both of the $L$-packet $\Sigma_0$ containing $\pi_G$ and the corresponding packet $\Sigma_0^H$ of $\mathrm{GU}_n(D)$ are singleton.  
\end{prop}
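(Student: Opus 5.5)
By Lemma~\ref{single}, it suffices to show that the Xu packet $\Sigma_0$ containing a simple supercuspidal $\pi_G$ is a singleton; the statement for $\Sigma_0^H$ then follows at once. By the construction recalled in Section~\ref{Xus-local}, $\Sigma_0$ is determined by the $L$-packet $\Pi_\phi$ of $\mathrm{Sp}_{2n}(F)$ whose members occur in $\pi_G|_{\mathrm{Sp}_{2n}(F)}$. So the plan is to understand this restriction and then count how many $\widetilde{G}(F)$-orbits in $\Pi_\phi$ can come from $\Sigma_0$.

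First I would analyse $\pi_G|_{\mathrm{Sp}_{2n}(F)}$. Write $\pi_G$ as the compact induction from $Z_{\G}(F)I^+$, with $I^+$ the pro-unipotent radical of an Iwahori subgroup, of a character built from an affine generic character (together with an element normalizing $I^+$). Then $\pi_G|_{\mathrm{Sp}_{2n}(F)}$ is a direct sum of simple supercuspidal representations of $\mathrm{Sp}_{2n}(F)$. These summands are permuted transitively by $\G(F)/Z_{\G}(F)\mathrm{Sp}_{2n}(F)\cong F^\times/(F^\times)^2$, and they correspond to the affine generic characters in the $\widetilde{T}$-orbit, taken modulo $T$-conjugation. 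This is the same bookkeeping as in Lemma~\ref{num-of-orbits}, carried out for affine generic characters instead of generic characters of $N$.

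Second, I would invoke the known description of $L$-packets of simple supercuspidals of $\mathrm{Sp}_{2n}(F)$, via Arthur's endoscopic character identities as computed by Oi (for $p>2$). Such a packet $\Pi_\phi$ consists of simple supercuspidals together with possibly some other members. The claim I want is that the simple supercuspidal members of $\Pi_\phi$ form exactly one $\G(F)$-orbit, namely the orbit of the summands of $\pi_G|_{\mathrm{Sp}_{2n}(F)}$. One could instead argue through genericity: every simple supercuspidal is generic for a suitable Whittaker datum. The proposition in Section 4.2 gives a unique generic member of $\Sigma_0$ whose restriction exhausts all Whittaker data. Since each discrete $L$-packet of $\mathrm{Sp}_{2n}(F)$ has exactly one generic member for each Whittaker datum, this orbit accounts for all generic members of $\Pi_\phi$.

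Third, I would show that $\Pi_\phi$ contains no further member. If $\Pi_\phi$ were larger, its extra members would give further elements of $\widetilde{\Pi}_{\phi,\tilde\chi}$ lying outside $\pi_G\otimes X$. I would rule this out by computing the component group $\mathcal{S}_\phi$. For a simple supercuspidal of $\mathrm{Sp}_{2n}$, $\phi$ is an irreducible orthogonal representation of $W_F$ of dimension $2n+1$ (Adrian--Kaplan, Oi). Hence $\mathcal{S}_\phi$ is trivial modulo the center, and $\Pi_\phi$ has size $|F^\times/(F^\times)^2|$ for the quasi-split form modulo its pure inner forms. Since $\mathrm{Sp}_{2n}$ has trivial $H^1$, all members lie in a single $\G(F)$-orbit. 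Then $\mathrm{Res}$ maps $\widetilde{\Pi}_{\phi,\tilde\chi}$ onto a single point, so $\widetilde{\Pi}_{\phi,\tilde\chi}=\pi_G\otimes X$ by \cite[Corollary 6.4]{MR3568940}. The decomposition (i) of Section~\ref{Xus-local} then forces $|\Sigma_0|=1$, because Proposition~\ref{inj} shows that $\Sigma_0$ injects into $\Pi_\phi/\G(F)$, which is a single point.

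The main obstacle is the third step, namely identifying $\phi$ as irreducible and checking that $\Pi_\phi$ is a single $\G(F)$-orbit. This needs external input on the Langlands parameters of simple supercuspidals of $\mathrm{Sp}_{2n}$, which is where the hypothesis $p>2$ enters. I would cite Oi's results for this.
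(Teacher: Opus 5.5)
Your skeleton is the paper's. The paper cites \cite[Theorem 5.1, Proposition 5.2]{MR4756397} for the fact that the $L$-packet $\Pi_\phi$ of $\mathrm{Sp}_{2n}(F)$ containing a simple supercuspidal is a single $\G(F)$-orbit. It then gets $|\Sigma|=1$ from Proposition~\ref{inj}, uses $|\Sigma_0|=|\Sigma|$ (since $\Sigma_0=\Sigma\otimes\mu_0^{-1}$), and finishes with Lemma~\ref{single}. If your third step simply cites that result, your proof coincides with the paper's.

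However, the argument you actually give in the third step is wrong and should be dropped.
\begin{itemize}
\item The parameter of a simple supercuspidal of $\mathrm{Sp}_{2n}(F)$ is not an irreducible $(2n+1)$-dimensional representation. An irreducible representation of $W_F$ has the single slope $\mathrm{Sw}/\dim$, and $\mathrm{Sw}/(2n+1)$ is never the depth $1/(2n)$. Oi and Adrian--Kaplan find $\phi=\tau\oplus\omega$, with $\tau$ irreducible orthogonal of dimension $2n$ and $\omega$ quadratic. So $\mathcal{S}_\phi\cong\mathbb{Z}/2\mathbb{Z}$ and $\Pi_\phi$ has two members.
\item Your claims are mutually inconsistent. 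A trivial $\mathcal{S}_\phi$ would make $\Pi_\phi$ a singleton. That contradicts your first step, where $\pi_G|_{\mathrm{Sp}_{2n}(F)}$ has several non-isomorphic constituents, all in $\Pi_\phi$. It also contradicts your count $|\Pi_\phi|=|F^\times/(F^\times)^2|$. Moreover, $\mathrm{Sp}_{2n}$ has no nontrivial pure inner forms, so ``modulo pure inner forms'' has no content.
\item Most importantly, $H^1(F,\mathrm{Sp}_{2n})=1$ does not imply that an $L$-packet is one $\G(F)$-orbit. Discrete packets of $\mathrm{Sp}_{2n}(F)$ generally split into several $\G(F)$-orbits, and this is exactly why Xu's packets $\Pi_{\tilde\phi}$ can have more than one element.
\end{itemize}
As written, the crucial step is therefore unjustified.

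If you want to use your own second step, here is a correct route. Oi's results give in particular that $\Pi_\phi$ contains only simple supercuspidals, hence only generic members. Each member is then the unique $\psi'$-generic member of $\Pi_\phi$ for some Whittaker datum $\psi'$. Since $\pi_G$ is generic, the proposition of Section 4.2 gives $\pi_G|_{\mathrm{Sp}_{2n}(F)}$ a $\psi'$-generic constituent for every $\psi'$. Hence $\Pi_\phi$ is exactly the set of constituents of $\pi_G|_{\mathrm{Sp}_{2n}(F)}$, a single $\G(F)$-orbit, with no component-group computation needed.

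One small point remains: Proposition~\ref{inj} concerns $\Sigma$, not $\Sigma_0$. Either pass through $\Sigma_0=\Sigma\otimes\mu_0^{-1}$ as the paper does, or argue directly from (i) of Section~\ref{Xus-local}. Once $\widetilde{\Pi}_{\phi,\tilde\chi}=\pi_G\otimes X$ is a single $X$-orbit with stabilizer $\alpha(\mathcal{S}_{\underline{\phi}})$, the decomposition (i) forces $|\Pi_{\tilde\phi}|=1$.
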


\begin{prf}
	By \cite[Theorem 5.1]{MR4756397} and \cite[Proposition 5.2]{MR4756397}, the simple supercuspidal $L$-packets of $\mathrm{Sp}_{2n}(F)$ consists of a unique adjoint orbit.  Hence, by Proposition~\ref{inj}, $\Sigma$ is singleton.  Since $\Sigma_0$ has the same cardinality as $\Sigma$, Lemma~\ref{single} shows the proposition.  
\end{prf}

\def\cftil#1{\ifmmode\setbox7\hbox{$\accent"5E#1$}\else
  \setbox7\hbox{\accent"5E#1}\penalty 10000\relax\fi\raise 1\ht7
  \hbox{\lower1.15ex\hbox to 1\wd7{\hss\accent"7E\hss}}\penalty 10000
  \hskip-1\wd7\penalty 10000\box7}
  \def\cftil#1{\ifmmode\setbox7\hbox{$\accent"5E#1$}\else
  \setbox7\hbox{\accent"5E#1}\penalty 10000\relax\fi\raise 1\ht7
  \hbox{\lower1.15ex\hbox to 1\wd7{\hss\accent"7E\hss}}\penalty 10000
  \hskip-1\wd7\penalty 10000\box7}
  \def\cftil#1{\ifmmode\setbox7\hbox{$\accent"5E#1$}\else
  \setbox7\hbox{\accent"5E#1}\penalty 10000\relax\fi\raise 1\ht7
  \hbox{\lower1.15ex\hbox to 1\wd7{\hss\accent"7E\hss}}\penalty 10000
  \hskip-1\wd7\penalty 10000\box7}
  \def\cftil#1{\ifmmode\setbox7\hbox{$\accent"5E#1$}\else
  \setbox7\hbox{\accent"5E#1}\penalty 10000\relax\fi\raise 1\ht7
  \hbox{\lower1.15ex\hbox to 1\wd7{\hss\accent"7E\hss}}\penalty 10000
  \hskip-1\wd7\penalty 10000\box7} \def\cprime{$'$} \def\cprime{$'$}
  \newcommand{\dummy}[1]{}
\providecommand{\bysame}{\leavevmode\hbox to3em{\hrulefill}\thinspace}
\providecommand{\MR}{\relax\ifhmode\unskip\space\fi MR }
\providecommand{\MRhref}[2]{%
  \href{http://www.ams.org/mathscinet-getitem?mr=#1}{#2}
}
\providecommand{\href}[2]{#2}


\begin{thebibliography}{SGA4$\frac12$}

\bibitem[Art88]{MR0939691}
J.~G. Arthur, The invariant trace formula. II. Global theory, J. Amer. Math. Soc. {\bf 1} (1988), no.~3, 501--554.

\bibitem[Art01]{MR1802795}
J.~G. Arthur, A stable trace formula. II. Global descent, Invent. Math. {\bf 143} (2001), no.~1, 157--220.

\bibitem[Art02]{MR1954821}
J.~G. Arthur, A stable trace formula. I. General expansions, J. Inst. Math. Jussieu {\bf 1} (2002), no.~2, 175--277.

\bibitem[Art03]{MR2031854}
J.~G. Arthur, A stable trace formula. III. Proof of the main theorems, Ann. of Math. (2) {\bf 158} (2003), no.~3, 769--873.

\bibitem[Art05]{MR2192011}
J.~G. Arthur, An introduction to the trace formula, in {\it Harmonic analysis, the trace formula, and Shimura varieties}, 1--263, Clay Math. Proc., 4, Amer. Math. Soc., Providence, RI, 2005.

\bibitem[Art13]{MR3135650}
J.~Arthur, \emph{{\dummy{A}}{T}he endoscopic classification of representations:
  Orthogonal and symplectic groups}, American Mathematical Society Colloquium
  Publications, vol.~61, American Mathematical Society, Providence, RI, 2013.

\bibitem[ABPS16]{MR3618046}
A.-M.~Aubert et al., Depth and the local Langlands correspondence, in {\it Arbeitstagung Bonn 2013}, 17--41, Progr. Math., 319, Birkh\"auser/Springer, Cham, 2016.

\bibitem[Bad02]{MR1951441}
A.~I.~Badulescu, Correspondance de Jacquet-Langlands pour les corps locaux de caract\'eristique non nulle, Ann. Sci. \'Ecole Norm. Sup. (4) {\bf 35} (2002), no.~5, 695--747.

\bibitem[CG15]{MR3267112}
P.-S.~Chan and W.~T.~Gan, \emph{{The local Langlands conjecture for ${\rm GSp}(4)$ III: Stability and twisted endoscopy}},  J. Number Theory {\bf 146} (2015), 69--133.

\bibitem[DKV84]{MR0771672}
P. Deligne, D.~A. Kazhdan and M.-F. Vign\'eras, Repr\'esentations des alg\`ebres centrales simples $p$-adiques, in {\it Representations of reductive groups over a local field}, 33--117, Travaux en Cours, Hermann, Paris, 1984.

\bibitem[FK86]{MR0876160}
Y.~Z. Flicker and D.~A. Kazhdan, Metaplectic correspondence, Inst. Hautes \'Etudes Sci. Publ. Math. No. 64 (1986), 53--110.

\bibitem[GT11]{MR2800725}
W.~T. Gan and S. Takeda, The local Langlands conjecture for ${\rm GSp}(4)$, Ann. of Math. (2) {\bf 173} (2011), no.~3, 1841--1882.

\bibitem[GT14]{MR3214276}
W.~T. Gan and W. Tantono, The local Langlands conjecture for $\rm GSp(4)$, II: The case of inner forms, Amer. J. Math. {\bf 136} (2014), no.~3, 761--805.

\bibitem[JL70]{MR0401654}
H.~Jacquet and R.~P.~Langlands, {\it Automorphic forms on ${\rm GL}(2)$}, Lecture Notes in Mathematics, Vol. 114, Springer, Berlin-New York, 1970.

\bibitem[Kal]{MR4680348}
T.~Kaletha, \emph{{Representation of reductive groups over local fields}}, in {\it ICM---International Congress of Mathematicians. Vol. 4. Sections 5--8}, 2948--2975, EMS Press, Berlin.

\bibitem[Kot83]{MR0697075}
R.~E. Kottwitz, Sign changes in harmonic analysis on reductive groups, Trans. Amer. Math. Soc. {\bf 278} (1983), no.~1, 289--297.

\bibitem[Kot88]{MR0942522}
R.~E.~Kottwitz, \emph{{Tamagawa numbers}}, Ann. of Math. (2) {\bf 127} (1988), no.~3, 629--646.

\bibitem[KS23]{MR4556781}
A.~Kret and S.~W.~Shin, \emph{{Galois representations for general symplectic groups}}, J. Eur. Math. Soc. (JEMS) {\bf 25} (2023), no.~1, 75--152.

\bibitem[Oi24]{MR4756397}
M.~Oi, \emph{{Simple supercuspidal L-packets of quasi-split classical groups}}, Mem. Amer. Math. Soc. {\bf 297} (2024), no.~1483, v+161 pp.

\bibitem[Shi12]{MR3004076}
S.~W.~Shin, \emph{{Automorphic Plancherel density theorem}}, Israel J. Math. {\bf 192} (2012), no.~1, 83--120.

\bibitem[Var]{some-comment}
S.~Varma,
\emph{{Some comments on the stable Bernstein center}},
preprint, available at
\url{https://mathweb.tifr.res.in/~sandeepv/stable_center_classical_groups.pdf}.

\bibitem[Xu16]{MR3568940}
B.~Xu, \emph{{On a lifting problem of L-packets}}, Compos. Math. {\bf 152} (2016), no.~9, 1800--1850.

\bibitem[Xu18]{MR3747484}
B.~Xu, \emph{{L-packets of quasisplit $GSp(2n)$ and $GO(2n)$}}, Math. Ann. {\bf 370} (2018), no.~1-2, 71--189.

\bibitem[Xu25]{MR4887967}
B.~Xu, \emph{{Global $L$-packets of quasisplit ${\rm GSp}(2n)$ and ${\rm GO}(2n)$}}, Amer. J. Math. {\bf 147} (2025), no.~2, 401--464.

\bibitem[Zha17]{MR3681684}
Q. Zhang, A local converse theorem for $\rm U(1,1)$, Int. J. Number Theory {\bf 13} (2017), no.~8, 1931--1981.

\end{thebibliography}
\end{document}